\providecommand{\U}[1]{\protect\rule{.1in}{.1in}}
\newtheorem{theorem}{Theorem}[section]
\newtheorem{lemma}[theorem]{Lemma}
\newtheorem{proposition}[theorem]{Proposition}
\newtheorem{remark}[theorem]{Remark}
\theoremstyle{definition}
\newtheorem{notation}[theorem]{Notation}
\theoremstyle{remark}
\newtheorem{example}[theorem]{Example}
\numberwithin{equation}{section}
\begin{document}

\title{A Neumann series of Bessel functions representation for solutions of perturbed Bessel equations}
\author{Vladislav V. Kravchenko$^1$, Sergii M. Torba$^1$ and Ra\'{u}l Castillo-P\'{e}rez$^2$\\{\small $^1$ Departamento de Matem\'{a}ticas, CINVESTAV del IPN, Unidad
Quer\'{e}taro, }\\{\small Libramiento Norponiente \#2000, Fracc. Real de Juriquilla,
Quer\'{e}taro, Qro., 76230 MEXICO.}\\
{\small $^2$ Maestr\'{\i}a en Telecomunicaciones, ESIME Zacatenco,  Instituto Polit\'{e}cnico Nacional,}\\{\small Av. Instituto Polit\'{e}cnico Nacional S/N, D.F. Mexico, 07738 MEXICO.}\\
{\small e-mail: vkravchenko@math.cinvestav.edu.mx,
storba@math.cinvestav.edu.mx, rcastillo@ipn.mx \thanks{Research was supported by CONACYT, Mexico
via the projects 166141 and 222478. R.~Castillo would like to thank the support of CONACYT and of the SIBE and EDI programs of the IPN as well as
that of the project SIP 20160525.}}}
\maketitle

\begin{abstract}
A new representation for a regular solution of the perturbed Bessel equation of the form $Lu=-u^{\prime\prime}+\left(  \frac{l(l+1)}{x^{2}}+q(x)\right)
u=\omega^{2}u$ is obtained. The solution is represented as a Neumann series of
Bessel functions uniformly convergent with respect to $\omega$. For the
coefficients of the series explicit direct formulas are obtained in terms of
the systems of recursive integrals arising in the spectral parameter power
series (SPPS) method, as well as convenient for numerical computation
recurrent integration formulas.

The result is based on application of several ideas from the classical
transmutation (transformation) operator theory, recently discovered mapping properties of the
transmutation operators involved and a Fourier-Legendre series expansion of
the transmutation kernel. For convergence rate estimates, asymptotic formulas, a Paley-Wiener theorem and some results from constructive approximation theory were used.

We show that the analytical representation obtained among other possible
applications offers a simple and efficient numerical method able to compute
large sets of eigendata with a nondeteriorating accuracy.

\end{abstract}

\section{Introduction}

In the present work the equation
\begin{equation}
-u^{\prime\prime}+\left(  \frac{l(l+1)}{x^{2}}+q(x)\right)  u=\omega
^{2}u,\qquad x\in(0,b], \label{I1}%
\end{equation}
is studied, where $l$ is a real number, $l\geq-\frac{1}{2}$, $q$ is a
complex-valued function on $[0,b]$ satisfying the following condition
\begin{equation}\label{Condition on q}
\begin{split}
    x q(x) &\in L_1(0,b) \qquad\text{if } l>-1/2, \\
    x^{1-\varepsilon} q(x) & \in L_1(0,b)\qquad \text{for some }\varepsilon>0 \text{ if } l=-1/2,
\end{split}
\end{equation}
and $\omega$ is a (complex)
spectral parameter. Denote $L=-\frac{d^{2}}{dx^{2}}+\frac{l(l+1)}{x^{2}}%
+q(x)$. Equations of the form (\ref{I1}) appear naturally in many real-world
applications after a separation of variables and therefore have received
considerable attention (see, e.g., \cite{BoumenirChanane}, \cite{CKT2013},
\cite{CKT2015}, \cite{Chebli1994}, \cite{Guillot 1988}, \cite{KosTesh2011},
\cite[Sect. 3.7]{Okamoto}, \cite{Weidmann}).

The main result of the work is a representation of a regular solution
$u_{l}(\omega,x)$ of (\ref{I1}) satisfying the asymptotic relation
$u_{l}(\omega,x)\sim x^{l+1}$ when $x\rightarrow0$ in the form of the
following series of Bessel functions%
\begin{equation}
u_{l}(\omega,x)=\frac{2^{l+1}\Gamma\left(  l+\frac{3}{2}\right)  }{\sqrt{\pi
}\omega^{l}}xj_{l}\left(  \omega x\right)  +\sum_{n=0}^{\infty}\left(
-1\right)  ^{n}\beta_{n}(x)j_{2n}(\omega x) \label{I2}%
\end{equation}
where $j_{k}$ denotes the spherical Bessel function of the first kind of order
$k$. For the coefficients $\beta_{n}$ explicit direct formulas are obtained in
terms of a system of recursive integrals arising in the SPPS method
\cite{CKT2013}. For a fixed $x$ the series in (\ref{I2}) represents a
so-called Neumann series of Bessel functions (see \cite{Watson},
\cite{Wilkins} and a recent publication on the subject \cite{Baricz et al} and
references therein).

We prove that the series (\ref{I2}) converges uniformly with respect to
$\omega$. More precisely, the very convenient estimates \eqref{est uln1} and \eqref{est uln2} are obtained
which guarantee that a partial sum from (\ref{I2}) approximates equally well
the solution $u_{l}(\omega,x)$ both for small and for large values of the
spectral parameter $\omega$. We illustrate this feature of (\ref{I2}) with
several numerical examples which show that this new representation besides
other possible applications can be used as a simple and powerful numerical
method for solving boundary value and spectral problems related to (\ref{I1}).

In the recent work \cite{KNT 2015} an analogous representation of solutions
was obtained for the regular one-dimensional Schr\"{o}dinger equation
$-y^{\prime\prime}+q(x)y=\omega^{2}y$. However following similar lines does
not lead to the result in the case of the perturbed Bessel equation. Several
new ideas are necessary. We make use of the properties of a couple of
transmutation operators, one of them relating the operators $\frac{d^{2}%
}{dx^{2}}$ and $\frac{d^{2}}{dx^{2}}-\frac{l(l+1)}{x^{2}}$ (studied in
\cite{Sitnik2010}, \cite{Santana Jessica}, \cite{KrST}) and the other relating the
operators $\frac{d^{2}}{dx^{2}}-\frac{l(l+1)}{x^{2}}$ and $\frac{d^{2}}%
{dx^{2}}-\frac{l(l+1)}{x^{2}}-q(x)$ (studied in \cite{Volk}, \cite{CoudrayCoz}). The first of
these transmutations is used for separating the part corresponding to the
unperturbed equation ($q\equiv0$) and the second to add the perturbation. We
specify that neither here nor in the title of the paper the perturbation means
any kind of smallness of the coefficient $q$. The use of two transmutation
operators allows us to show that the regular solution of (\ref{I1}) can be
represented in the form
\begin{equation}
u_{l}(\omega,x)=\frac{2^{l+1}\Gamma\left(  l+\frac{3}{2}\right)  }{\sqrt{\pi
}\omega^{l}}xj_{l}\left(  \omega x\right)  +\int_{0}^{x}R(x,t)\cos\omega
t\,dt\label{Intro ul}%
\end{equation}
where the kernel $R(x,t)$ is a sufficiently good function which admits a
convergent Fourier-Legendre series expansion. Moreover, this approach makes it
possible to write down the result of the integral $\int_{0}^{x}R(x,t)t^{2k}%
\,dt$ for any $k=0,1,2,\ldots$ and as a consequence to obtain explicit
formulas for the coefficients of the Fourier-Legendre series. Substitution of
the series  into (\ref{Intro ul}) leads to the main result (\ref{I2}). The convergence rate of the Fourier-Legendre series (and, consequently, of the series \eqref{I2}) depends on the smoothness of the integral kernel $R$. Only few basic properties of the kernel $R$ can be obtained using the results from \cite{Volk} and \cite{CoudrayCoz}. We implement a different approach based on the asymptotic formulas from \cite{KosSakhTesh2010} and \cite{FitouhiHamza}, a Paley-Wiener theorem and the constructive approximation theory \cite{DeVoreLorentz}. As a result, we present close to optimal convergence rate estimates depending on the parameter $l$ and the smoothness of the potential $q$.

Analogous formulas are developed for the derivative of the regular solution.

The direct explicit formula for the coefficient $\beta_{n}$ is not however the
most convenient for numerical computation. This is due to the fact that it
involves coefficients of the Legendre polynomial of order $2n$ which grow
rapidly when $n$ grows and hence, although the coefficients $\beta_{n}$
decrease, one needs to compute linear combinations of large numbers. This
reduces considerably the number of the coefficients which can be computed in
machine precision. Fortunately, there exists another way to compute the
coefficients $\beta_{n}$ with the aid of a recurrent integration procedure
similar to that arising in the SPPS method \cite{KrPorter2010}, \cite{KKRosu}
and much more stable in practice. To develop the procedure we find
the sequence of differential equations satisfied by the coefficients
$\beta_{n}$. This is done by substitution of (\ref{I2}) into (\ref{I1}).

In the last part of the paper we show that (\ref{I2}) offers a simple and
powerful numerical method for computing regular solutions of (\ref{I1}) and
for solving spectral problems related. The uniform convergence with respect to
$\omega$ allows one to compute large sets of eigendata with a
non-deteriorating accuracy.

\section{From regular to singular}

Consider the following integral operator defined on $C[0,b]$,%
\begin{equation*}
Y_{l}v(x):=\frac{x^{-l}}{2^{l+\frac{1}{2}}\Gamma\left(  l+\frac{3}{2}\right)
}\int_{0}^{x}\left(  x^{2}-s^{2}\right)  ^{l}v(s)\,ds.
\end{equation*}
The following statement is a slightly precised result from \cite{Sitnik2010}.

\begin{proposition}[\cite{Santana Jessica, KrST}] Let $v\in C^{2}[0,b]$ and $v^{\prime}(0)=0$. Then
\begin{equation*}
\left(  \frac{d^{2}}{dx^{2}}-\frac{l(l+1)}{x^{2}}\right)  Y_{l}v=Y_{l}%
\frac{d^{2}}{dx^{2}}v.
\end{equation*}
\end{proposition}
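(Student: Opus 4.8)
The plan is to verify the transmutation identity by a direct computation, differentiating under the integral sign, and the main technical point will be controlling the boundary behaviour at $s=x$ where the factor $\left(x^{2}-s^{2}\right)^{l}$ may be singular when $l<0$.

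First I would write $Y_{l}v(x)=c_{l}\,x^{-l}I(x)$ with $c_{l}=\bigl(2^{l+\frac{1}{2}}\Gamma(l+\frac{3}{2})\bigr)^{-1}$ and $I(x)=\int_{0}^{x}\left(x^{2}-s^{2}\right)^{l}v(s)\,ds$. A clean way to handle the differentiations is to first perform the substitution $s=x\tau$, which gives $I(x)=x^{2l+1}\int_{0}^{1}\left(1-\tau^{2}\right)^{l}v(x\tau)\,d\tau$, so that $Y_{l}v(x)=c_{l}\,x^{l+1}\int_{0}^{1}\left(1-\tau^{2}\right)^{l}v(x\tau)\,d\tau$. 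With $v\in C^{2}[0,b]$ the integrand is now smooth in $x$ uniformly on $[0,b]$ (the singular weight $\left(1-\tau^{2}\right)^{l}$ is integrable on $[0,1]$ since $l>-1$), so I may differentiate twice under the integral sign, and I would compute $\bigl(\tfrac{d^{2}}{dx^{2}}-\tfrac{l(l+1)}{x^{2}}\bigr)Y_{l}v$ directly in this form.

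Alternatively, and this is the route I would actually carry out to avoid carrying the weight through derivatives, I would integrate by parts in the original variable to move derivatives from the kernel onto $v$. The hypothesis $v'(0)=0$ is exactly what makes the boundary term at $s=0$ vanish when one integrates by parts against $v'$, and near $s=x$ the vanishing of $\left(x^{2}-s^{2}\right)^{l+1}$ (valid since $l+1>0$) kills the other boundary term; this is precisely where the conditions $v'(0)=0$ and $l\geq-\frac{1}{2}$ (hence $l>-1$) are used. After suitable integrations by parts one rewrites $Y_{l}v$ in a form from which applying $\tfrac{d^{2}}{dx^{2}}-\tfrac{l(l+1)}{x^{2}}$ produces exactly $c_{l}\,x^{-l}\int_{0}^{x}\left(x^{2}-s^{2}\right)^{l}v''(s)\,ds=Y_{l}v''$. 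Concretely, one checks that the function $w=Y_{l}v$ satisfies $w''-\tfrac{l(l+1)}{x^{2}}w=Y_{l}v''$ by verifying that both sides agree after multiplication by $x^{l}$ and one differentiation, using the classical fact that $x^{-l}\int_{0}^{x}\left(x^{2}-s^{2}\right)^{l}(\cdot)\,ds$ intertwines the Bessel-type operator with $\tfrac{d^{2}}{dx^{2}}$ on functions flat at the origin.

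The main obstacle is purely the interchange of differentiation and integration together with the boundary terms at $s=x$: for $-\frac{1}{2}\le l<0$ the kernel $\left(x^{2}-s^{2}\right)^{l}$ blows up at the endpoint, so one must argue that the first $x$-derivative of $I(x)$ equals $2l\int_{0}^{x}x\left(x^{2}-s^{2}\right)^{l-1}v(s)\,ds$ only after an integration by parts (writing $v(s)=v(x)+\int_{x}^{s}v'(\sigma)\,d\sigma$, or regularising by replacing the upper limit $x$ with $x-\delta$ and letting $\delta\to0$). Once this differentiation-under-the-integral step is justified rigorously, the remainder is a bookkeeping computation with no further subtlety, and the stated identity follows; the hypotheses $v\in C^{2}[0,b]$ and $v'(0)=0$ enter exactly at the two places indicated above.
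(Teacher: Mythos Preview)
The paper does not prove this proposition; it is quoted from the cited references and stated without proof, so there is no argument in the paper to compare against.

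On the merits of your proposal: the substitution $s=x\tau$ is the right move and in fact dissolves the ``main obstacle'' you spend the last paragraph worrying about. After the change of variables the weight $(1-\tau^{2})^{l}$ no longer depends on $x$ and the limits of integration are fixed, so differentiation under the integral sign is immediate from $v\in C^{2}[0,b]$ and the integrability of $(1-\tau^{2})^{l}$ on $[0,1]$ (which holds since $l>-1$); there is no singular moving endpoint to regularise. Carrying out the two differentiations in the form $Y_{l}v(x)=c_{l}\,x^{l+1}\int_{0}^{1}(1-\tau^{2})^{l}v(x\tau)\,d\tau$ gives
\[
\Bigl(\tfrac{d^{2}}{dx^{2}}-\tfrac{l(l+1)}{x^{2}}\Bigr)Y_{l}v(x)-Y_{l}v''(x)
= c_{l}\Bigl(2(l+1)x^{l}\!\int_{0}^{1}\!(1-\tau^{2})^{l}\tau\,v'(x\tau)\,d\tau
- x^{l+1}\!\int_{0}^{1}\!(1-\tau^{2})^{l+1}v''(x\tau)\,d\tau\Bigr),
\]
and a single integration by parts in the second integral kills the right-hand side: the boundary term at $\tau=1$ vanishes because $l+1>0$, and the boundary term at $\tau=0$ vanishes because $v'(0)=0$. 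This is exactly where the two hypotheses enter, as you correctly identified. Your alternative route via integration by parts in the original $s$-variable is workable but unnecessary; the substitution already removes the analytic subtlety, so your proposal would be tightened by committing to the first method and dropping the hedging about regularisation.
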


In particular \cite{KrST},
\begin{equation}
Y_{l}:\,x^{2k}\mapsto\frac{\Gamma\left(  k+\frac{1}{2}\right)  \Gamma\left(
l+1\right)  }{2^{l+\frac{3}{2}}\Gamma\left(  l+\frac{3}{2}\right)
\Gamma\left(  k+l+\frac{3}{2}\right)  }x^{2k+l+1},\quad k=0,1,2,\ldots.
\label{Ylx2k}%
\end{equation}

Denote%
\[
b_{l}(\omega x):=\sqrt{\omega x}J_{l+\frac{1}{2}}\left(  \omega x\right)  .
\]
This function is a regular solution of the equation
\[
\left(  \frac{d^{2}}{dx^{2}}-\frac{l(l+1)}{x^{2}}\right)  v=-\omega
^{2}v,\qquad x\in(0,b].
\]
Its corresponding power series has the form%
\[
b_{l}(\omega x)=(\omega x)^{l+1}\sum_{k=0}^{\infty}\frac{(-1)^{k}(\omega
x)^{2k}}{2^{2k+l+\frac{1}{2}}\Gamma\left(  k+1\right)  \Gamma\left(
k+l+\frac{3}{2}\right)  }.
\]

\begin{remark}
\label{Rem Yl[cos]}From \eqref{Ylx2k} we obtain
\[
Y_{l}\left[  \cos\omega x\right]  =\frac{\sqrt{\pi}\Gamma\left(  l+1\right)
}{2\omega^{l+1}\Gamma\left(  l+\frac{3}{2}\right)  }b_{l}(\omega x).
\]

\end{remark}

\section{Transmutation of Bessel-type operators}
\label{Sect Transmut Bessel}
Throughout this section we assume that $q\in C[0,b]$.
In \cite{Volk} the existence of a unique continuous kernel $V(x,t)$ was proved
such that for all $\omega\in\mathbb{C}$ the function
\[
u(\omega,x)=\mathcal{T}\left[  b_{l}(\omega x)\right]  :=b_{l}(\omega
x)+\int_{0}^{x}V(x,t)b_{l}(\omega t)\,dt
\]
is a regular solution of the equation
\begin{equation}
\left(  \frac{d^{2}}{dx^{2}}-\frac{l(l+1)}{x^{2}}-q(x)\right)  u=-\omega
^{2}u,\qquad x\in(0,b] \label{Bessel type}%
\end{equation}
and
\begin{equation}
V(x,x)=\frac{Q(x)}{2} \label{V(x,x)}%
\end{equation}
where $Q(x):=\int_{0}^{x}q(t)dt$, see also \cite{CoudrayCoz}.

If $l\neq0$ then the left endpoint is singular. Despite that, the equation
$Lu=0$ possesses a solution $\phi(x)$ which is bounded at $x=0$ and satisfies
the following asymptotics at $x=0$
\begin{align}
\phi(x)  &  \sim x^{l+1},\quad x\rightarrow0,\label{SolAsymptotic}\\
\phi^{\prime}(x)  &  \sim(l+1)x^{l},\quad x\rightarrow0,
\label{DerSolAsymptotic}%
\end{align}
see, e.g., \cite[Lemma 3.2]{KosTesh2011} for a real-valued $q$. In
\cite{CKT2013} an explicit construction of the solution with this asymptotics
at zero for the general case of a complex-valued $q$ was given.

From now on we assume that there exists a non-vanishing on $(0,b]$
complex-valued solution $u_{0}$ of the equation
\begin{equation}
-u_{0}^{\prime\prime}+\left(  \frac{l(l+1)}{x^{2}}+q(x)\right)  u_{0}=0
\label{PartSolEq}%
\end{equation}
satisfying together with its first derivative the asymptotic relations
\eqref{SolAsymptotic} and \eqref{DerSolAsymptotic}. In \cite{CKT2013} the
existence and a procedure for construction of such a solution was given in the
case when $q(x)\geq0$, $x\in(0,b]$.

\begin{notation}\label{Not phik}
Let us define the following system of functions
\[
\varphi_{n}(x):=(-1)^{n}(2n)!u_{0}(x)\widetilde{X}^{(2n)}(x)
\]
where
\begin{equation}%
\begin{split}
\widetilde{X}^{(0)}  &  \equiv1,\\
\widetilde{X}^{(n)}(x)  &  =%
\begin{cases}
\displaystyle\int_{0}^{x}u_{0}^{2}(t)\widetilde{X}^{(n-1)}(t)\,dt, &
\text{if }n\text{ is odd},\\
-\displaystyle\int_{0}^{x}\frac{\widetilde{X}^{(n-1)}(t)}{u_{0}^{2}(t)}\,dt, &
\text{if }n\text{ is even}.
\end{cases}
\end{split}
\label{Xtilde}%
\end{equation}
We keep the notation $\widetilde{X}$ for consistency with
other publications on the SPPS method, see, e.g., \cite{KrPorter2010},
\cite{KKRosu}, \cite{KT Obzor}.
\end{notation}

In \cite{CKT2013} it was proved that
\begin{equation}
\mathcal{T}\left[  x^{2k+l+1}\right]  =(-1)^{k}2^{2k}k!\left(  l+\frac{3}%
{2}\right)  _{k}u_{0}(x)\widetilde{X}^{(2k)}(x)\quad\text{for any
}k=0,1,2,\ldots. \label{mapping property}%
\end{equation}

\begin{theorem}
\label{Th u(omega,x)}Let $q\in C[0,b]$, $l\geq-\frac{1}{2}$. There exists a
continuous function $R^{(0)}(x,t)$, $0\leq t\leq x\leq b$ such that for any
$\omega\in\mathbb{C}$ the function
\begin{equation}
u(\omega,x)=a(\omega)b_{l}(\omega x)+\int_{0}^{x}R^{(0)}(x,t)\cos\omega t\,dt
\label{uTh}%
\end{equation}
with
\[
a(\omega)=\frac{\sqrt{\pi}\Gamma\left(  l+1\right)  }{2\omega^{l+1}%
\Gamma\left(  l+\frac{3}{2}\right)  }%
\]
is a regular solution of \eqref{Bessel type}.
\end{theorem}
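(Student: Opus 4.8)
The plan is to combine the two transmutation operators advertised in the introduction: the operator $Y_l$ from Section 2 (relating $\frac{d^2}{dx^2}$ to $\frac{d^2}{dx^2}-\frac{l(l+1)}{x^2}$) and the Volk--Coudray-Coz operator $\mathcal{T}$ from Section 3 (adding the perturbation $q$). Starting from the elementary fact that $\cos\omega x$ solves $v''=-\omega^2 v$ with $v'(0)=0$, Remark~\ref{Rem Yl[cos]} together with the Proposition gives that $Y_l[\cos\omega x]$ is a multiple of $b_l(\omega x)$, namely $Y_l[\cos\omega x]=a(\omega)\,\omega\,b_l(\omega x)$ — more precisely, one reads off from Remark~\ref{Rem Yl[cos]} that $Y_l[\cos\omega x] = \frac{\sqrt{\pi}\,\Gamma(l+1)}{2\omega^{l+1}\Gamma(l+\frac32)} b_l(\omega x)$, which is exactly $a(\omega) b_l(\omega x)$. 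Applying $\mathcal{T}$ then produces a regular solution of \eqref{Bessel type}: $u(\omega,x)=\mathcal{T}\bigl[a(\omega)b_l(\omega x)\bigr]=\mathcal{T} Y_l[\cos\omega x]$. So the composition $\mathcal{T}Y_l$ is itself a transmutation sending $\cos\omega x$ to a regular solution $u(\omega,x)$ of \eqref{Bessel type}, and the whole task reduces to showing that this composed operator has the Volterra integral form $v\mapsto a(\omega)b_l(\omega x)+\int_0^x R^{(0)}(x,t)v(t)\,dt$ with a continuous kernel $R^{(0)}$ independent of $\omega$.

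For that I would first work formally on the power basis. By \eqref{Ylx2k}, $Y_l$ sends $x^{2k}$ to a constant times $x^{2k+l+1}$, and by \eqref{mapping property}, $\mathcal{T}$ sends $x^{2k+l+1}$ to $(-1)^k 2^{2k}k!\,(l+\frac32)_k\,u_0(x)\widetilde{X}^{(2k)}(x)$; composing and tracking the Gamma factors, one gets a clean closed form $\mathcal{T}Y_l[x^{2k}] = c_k\, u_0(x)\widetilde X^{(2k)}(x)$ for an explicit constant $c_k$ (this is where the $\varphi_n$ of Notation~\ref{Not phik} will enter). Writing $\cos\omega x=\sum_k \frac{(-1)^k\omega^{2k}}{(2k)!}x^{2k}$ and applying $\mathcal{T}Y_l$ term by term, one obtains a series representation of $u(\omega,x)$ in the functions $u_0(x)\widetilde X^{(2k)}(x)$ with coefficients in $\omega$; regrouping should separate the $b_l(\omega x)$ piece from an integral against $\cos\omega t$. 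Concretely, I expect to use the standard identity expressing $\int_0^x R^{(0)}(x,t)\cos\omega t\,dt$ in terms of its moments $\int_0^x R^{(0)}(x,t)t^{2k}\,dt$: if one defines $R^{(0)}$ via a Fourier--Legendre expansion on $[-x,x]$ whose moments match the required values $\mathcal{T}Y_l[x^{2k}]$ minus the $b_l$-contribution, the representation \eqref{uTh} follows. The continuity of $R^{(0)}$ comes from the known continuity of the kernels $V(x,t)$ of $\mathcal{T}$ (from \cite{Volk}, \cite{CoudrayCoz}) and the explicit smoothing kernel of $Y_l$: composing two integral operators with continuous (respectively locally integrable-with-continuous-primitive) kernels and then performing the change of variables $b_l(\omega t)\mapsto \cos\omega t$ via $Y_l$'s inverse-type relation yields a continuous $R^{(0)}$.

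A cleaner route, which I would actually prefer for rigor, avoids term-by-term manipulation of divergent-looking series: one knows from \cite{Volk} that $u(\omega,x)=b_l(\omega x)+\int_0^x V(x,t)b_l(\omega t)\,dt$ with $V$ continuous, so $u(\omega,x)=\mathcal{T}[b_l(\omega x)]$. Multiply by $a(\omega)$ and use Remark~\ref{Rem Yl[cos]} to write $a(\omega)b_l(\omega x)=Y_l[\cos\omega x]$, and also — crucially — express $a(\omega)b_l(\omega t)$ inside the integral as $Y_l[\cos\omega\,\cdot\,](t)$, i.e. as $\frac{t^{-l}}{2^{l+1/2}\Gamma(l+3/2)}\int_0^t (t^2-s^2)^l\cos\omega s\,ds$. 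Substituting and interchanging the order of integration (justified by continuity and the local integrability condition \eqref{Condition on q}, here with $q\in C[0,b]$ so no subtlety) converts the double integral into $\int_0^x R^{(0)}(x,t)\cos\omega t\,dt$ where
\[
R^{(0)}(x,t)=\frac{t^{-l}}{2^{l+\frac12}\Gamma\!\left(l+\frac32\right)}\left( \frac{(x^2-t^2)^l}{?}+\int_t^x V(x,s)\,s^{-l}(s^2-t^2)^l\,ds \cdot \text{(normalization)} \right),
\]
modulo getting the exact constants right. The main obstacle I anticipate is precisely this bookkeeping: the inner integral of $Y_l$ has kernel $(t^2-s^2)^l$ which is only weakly singular in $s$ near $s=t$ when $-\frac12\le l<0$, so one must check integrability and the legitimacy of Fubini carefully, and one must verify that the resulting $R^{(0)}$ is genuinely continuous up to the diagonal $t=x$ (not merely integrable) — this is where the hypothesis $l\ge-\frac12$ and the regularity $q\in C[0,b]$ are used. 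Once the interchange is justified and the constants are matched against $a(\omega)$ as defined in the statement, formula \eqref{uTh} with continuous $R^{(0)}$ follows, and the fact that $u(\omega,x)$ solves \eqref{Bessel type} is inherited directly from the corresponding property of $\mathcal{T}[b_l(\omega x)]$.
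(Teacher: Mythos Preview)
Your ``cleaner route'' is precisely the paper's proof: define $u(\omega,x)=\mathcal{T}Y_l[\cos\omega x]=a(\omega)b_l(\omega x)+\int_0^x V(x,t)\,Y_l[\cos\omega\cdot](t)\,dt$, substitute the integral definition of $Y_l$, apply Fubini, and read off $R^{(0)}$. Your displayed formula for $R^{(0)}$ is slightly garbled, though: the leading $a(\omega)b_l(\omega x)$ term stays \emph{outside} the integral (it is the first summand in \eqref{uTh}), so there is no $(x^2-t^2)^l$ contribution to $R^{(0)}$, and the factor $t^{-l}$ you pulled in front should not be there --- the correct kernel is simply
\[
R^{(0)}(x,t)=\frac{1}{2^{l+\frac12}\Gamma\!\left(l+\tfrac32\right)}\int_t^x V(x,s)\,s^{-l}(s^2-t^2)^l\,ds.
\]
The paper disposes of the continuity question in one line (``since $V$ is continuous, $R^{(0)}$ is continuous as well'') and does not pause over the Fubini justification or the weak singularity at $s=t$ for $-\tfrac12\le l<0$ that you flag; your instinct that these points deserve a sentence is sound, but they are routine given $l>-1$. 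Your first approach via moments and a Fourier--Legendre expansion is not how the paper proves \emph{this} theorem --- that machinery appears later, in Section~4, where the explicit kernel $R^{(0)}$ obtained here is expanded in Legendre polynomials.
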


\begin{proof}
Consider
\begin{equation}
u(\omega,x)=\mathcal{T}Y_{l}\left[  \cos\omega x\right]  =a(\omega
)b_{l}(\omega x)+\int_{0}^{x}V(x,t)Y_{l}\left[  \cos\omega t\right]  \,dt.
\label{u(omega x)}%
\end{equation}
This function is a solution of (\ref{Bessel type}) due to the fact that
$Y_{l}\left[  \cos\omega t\right]  =a(\omega)b_{l}(\omega x)$.
Hence it is sufficient to prove that (\ref{u(omega x)}) can be written in the
form (\ref{uTh}).

Consider
\begin{align*}
\int_{0}^{x}V(x,t)Y_{l}\left[  \cos\omega t\right]  \,dt  &  =\frac
{1}{2^{l+\frac{1}{2}}\Gamma\left(  l+\frac{3}{2}\right)  }\int_{0}%
^{x}V(x,t)t^{-l}\int_{0}^{t}\left(  t^{2}-s^{2}\right)  ^{l}\cos\omega
s\,dsdt\\
&  =\frac{1}{2^{l+\frac{1}{2}}\Gamma\left(  l+\frac{3}{2}\right)  }\int
_{0}^{x}\cos\omega s\int_{s}^{x}V(x,t)t^{-l}\left(  t^{2}-s^{2}\right)
^{l}\,dtds.
\end{align*}
Denote
\begin{equation}\label{R(0)}
R^{(0)}(x,s)    =\frac{1}{2^{l+\frac{1}{2}}\Gamma\left(  l+\frac{3}%
{2}\right)  }\int_{s}^{x}V(x,t)t^{-l}\left(  t^{2}-s^{2}\right)
^{l}\,dt
  =\frac{1}{2^{l+\frac{1}{2}}\Gamma\left(  l+\frac{3}{2}\right)  }\int
_{s}^{x}V(x,t)\left(  t-\frac{s^{2}}{t}\right)  ^{l}\,dt. %
\end{equation}
Since $V$ is continuous, $R^{(0)}$ is continuous as well.
\end{proof}

The solution (\ref{uTh}) can be written in the form of an SPPS \cite{CKT2013},
\begin{equation}
u(\omega,x)=\frac{\sqrt{\pi}\Gamma\left(  l+1\right)  }{2^{l+\frac{3}{2}%
}\Gamma^{2}\left(  l+\frac{3}{2}\right)  }u_{0}(x)\sum_{k=0}^{\infty}%
\omega^{2k}\widetilde{X}^{(2k)}(x). \label{u via SPPS}%
\end{equation}
Indeed, we have that
\begin{align*}
u(\omega,x)  &  =\frac{a(\omega)}{2^{l+\frac{1}{2}}}\sum_{k=0}^{\infty}\frac{\left(
-1\right)  ^{k}\omega^{2k+l+1}}{2^{2k}\Gamma\left(  k+1\right)  \Gamma\left(
k+l+\frac{3}{2}\right)  }\mathcal{T}\left[  x^{2k+l+1}\right] \\
& =\frac{a(\omega)}{2^{l+\frac{1}{2}}}\sum_{k=0}^{\infty}\frac{\omega
^{2k+l+1}\left(  l+\frac{3}{2}\right)  _{k}u_{0}(x)\widetilde{X}^{(2k)}%
(x)}{\Gamma\left(  k+l+\frac{3}{2}\right)  }
\end{align*}
where we used (\ref{mapping property}). Taking into account the definition of
$a(\omega)$ and the identity $\left(  l+\frac{3}{2}\right)  _{k}=\Gamma\left(
k+l+\frac{3}{2}\right)  /\Gamma\left(  l+\frac{3}{2}\right)  $ we obtain
(\ref{u via SPPS}). Thus,
\begin{align*}
u_{0}(x)\sum_{k=0}^{\infty}\omega^{2k}\widetilde{X}^{(2k)}(x)  &
=\frac{2^{l+\frac{3}{2}}\Gamma^{2}\left(  l+\frac{3}{2}\right)  }{\sqrt{\pi
}\Gamma\left(  l+1\right)  }a(\omega)b_{l}(\omega x)+\int_{0}^{x}%
R(x,t)\cos\omega t\,dt\\
&  =\frac{2^{l+\frac{1}{2}}\Gamma\left(  l+\frac{3}{2}\right)  }{\omega^{l+1}%
}b_{l}(\omega x)+\int_{0}^{x}R(x,t)\sum_{k=0}^{\infty}\frac{\left(  -1\right)
^{k}\left(  \omega t\right)  ^{2k}}{\left(  2k\right)  !}\,dt
\end{align*}
where
\begin{equation}
R(x,t):=\frac{2^{l+\frac{3}{2}}\Gamma^{2}\left(  l+\frac{3}{2}\right)  }%
{\sqrt{\pi}\Gamma\left(  l+1\right)  }R^{(0)}(x,t). \label{R=Const R0}%
\end{equation}
Hence,
\[
u_{0}(x)\sum_{k=0}^{\infty}\omega^{2k}\widetilde{X}^{(2k)}(x)=\Gamma\left(
l+\frac{3}{2}\right)  \sum_{k=0}^{\infty}\frac{\left(  -1\right)  ^{k}%
\omega^{2k}x^{2k+l+1}}{2^{2k}\Gamma\left(  k+1\right)  \Gamma\left(
k+l+\frac{3}{2}\right)  }+\sum_{k=0}^{\infty}\frac{\left(  -1\right)
^{k}\omega^{2k}}{\left(  2k\right)  !}\int_{0}^{x}R(x,t)t^{2k}\,dt.
\]
The coefficients of the equal uniformly convergent power series with respect
to $\omega$ must coincide, thus,
\[
u_{0}(x)\widetilde{X}^{(2k)}(x)=\frac{\left(  -1\right)  ^{k}\Gamma\left(
l+\frac{3}{2}\right)  x^{2k+l+1}}{2^{2k}\Gamma\left(  k+1\right)
\Gamma\left(  k+l+\frac{3}{2}\right)  }+\frac{\left(  -1\right)  ^{k}}{\left(
2k\right)  !}\int_{0}^{x}R(x,t)t^{2k}\,dt.
\]
Hence
\begin{equation}
\varphi_{k}(x)=c_{k,l}x^{2k+l+1}+\int_{0}^{x}R(x,t)t^{2k}\,dt\quad\text{for
any }k=0,1,2,\ldots\label{phik=int}%
\end{equation}
where
\begin{equation}\label{ckl}
c_{k,l}:=\frac{\Gamma\left(  l+\frac{3}{2}\right)  \Gamma\left(
k+\frac{1}{2}\right)  }{\sqrt{\pi}\Gamma\left(  k+l+\frac{3}{2}\right)  },
\end{equation}
and we used the identities $\left(  2k\right)  !=\Gamma\left(  2k+1\right)
=k2^{2k}\Gamma\left(  k\right)  \Gamma\left(  k+\frac{1}{2}\right)  /\sqrt
{\pi}$.

\section{A Fourier-Legendre representation of the kernel $R(x,t)$}

Multiplication of equality (\ref{uTh}) by the constant $\frac{2^{l+\frac{3}%
{2}}\Gamma^{2}\left(  l+\frac{3}{2}\right)  }{\sqrt{\pi}\Gamma\left(
l+1\right)  }$ from (\ref{R=Const R0}) allows us to write down a regular
solution of (\ref{Bessel type}) in the form%
\begin{equation}
u_{l}(\omega,x)=d(\omega)b_{l}(\omega x)+\int_{0}^{x}R(x,t)\cos\omega t\,dt
\label{ul}%
\end{equation}
with $d(\omega):=\frac{2^{l+\frac{1}{2}}\Gamma\left(  l+\frac{3}{2}\right)
}{\omega^{l+1}}$. It is related with the solution from Theorem
\ref{Th u(omega,x)} by $u_{l}(\omega,x)=\frac{2^{l+\frac{3}{2}}\Gamma
^{2}\left(  l+\frac{3}{2}\right)  }{\sqrt{\pi}\Gamma\left(  l+1\right)
}u(\omega,x)$ and for any $\omega$ satisfies the following asymptotic equality
when $x\rightarrow0$, $u_{l}(\omega,x)\sim x^{l+1}$. As a next step we
construct a Fourier-Legendre representation for the kernel $R(x,t)$.

First, we
need to recall the following notations, c.f. \cite[Chap.\ 2, \S7 and \S9]{DeVoreLorentz}. For $\alpha>0$ we write $\alpha=r+\beta$, where $r\in\mathbb{Z}$ and $0<\beta\le 1$, and say that a function $f$ belongs to $\operatorname{Lip}_\alpha(I)$ class, with $I$ being either a segment or the whole line, if $f\in C^{r}(I)$ and $f^{(r)}\in \operatorname{Lip}_\beta(I)$. Consider the difference operator $\Delta_h:L_p(I)\to L_p(I_h)$ acting on a function $f$ as $\Delta_h f(\cdot) = f(\cdot+h) - f(\cdot)$, here $I_h:=[a,b-h]$ if $I=[a,b]$, $h<b-a$ and $I_h:=I$ if $I=\mathbb{R}$. Then the $r$-th modulus of smoothness of $f$ is defined by
\[
\omega_r(f,t)_{L_p(I)}:=\sup_{0<h\le t}\| \Delta_h^r(f)\|_{L_p(I_{rh})}.
\]
For $\alpha>0$ let $r$ be the smallest integer satisfying $r>\alpha$, i.e., $r=[\alpha]+1$. Then the generalized Lipschitz class $\operatorname{Lip}_\alpha^*(I,p)$ is defined as the class of functions $f\in L_p(I)$ satisfying $\omega_r(f,t)_{L_p(I)}\le M t^\alpha$ for all $t>0$ with some constant $M=M(f)$. By $W_2^\alpha(\mathbb{R})$, $\alpha\ge 0$ we denote the fractional-order Sobolev space, also called Bessel potential space \cite[Chap. 7]{Adams} consisting of the functions satisfying $f\in L_2(\mathbb{R})$ and $(1+|\xi|^2)^{\alpha/2}\mathcal{F}[f](\xi)\in L_2(\mathbb{R})$, where $\mathcal{F}$ is the Fourier transform operator.

Following \cite{KosSakhTesh2010} we introduce the notation
\[
\tilde q(x) = \begin{cases}
|q(x)|, & l>-1/2,\\
\bigl(1-\log(x/b)\bigr)|q(x)|,& l=-1/2.
\end{cases}
\]

\begin{proposition}\label{Prop Smooth R}
Let $q$ satisfy the condition \eqref{Condition on q}. Suppose additionally that
\begin{equation}\label{Condition precise on q}
x^\alpha \tilde q(x)\in L_1(0,b)\qquad \text{for some }\alpha\in[0,1],\ \alpha<3/2+l.
\end{equation}
Let $x>0$ be fixed. Then there exists an even, compactly supported on $[-x,x]$ function $\widetilde R(x,t)$ such that
\begin{enumerate}
\item $\widetilde R\in W_2^{l+3/2-\alpha-\varepsilon}(\mathbb{R})$ for any sufficiently small $\varepsilon>0$;
if $\alpha < l+1$ then additionally
$\widetilde R\in \operatorname{Lip}_{l+1-\alpha-\varepsilon}(\mathbb{R})$.
\item $\widetilde R\in \operatorname{Lip}^*_{l+3/2-\alpha}(\mathbb{R},2)$;
\item the function $R$ from \eqref{ul} satisfies
\[
R(x,t) = 2 \widetilde R(x,t),\qquad 0\le t\le x.
\]
\end{enumerate}
\end{proposition}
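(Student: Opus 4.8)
For fixed $x>0$ the plan is to take for $\widetilde R(x,\cdot)$ the rescaled even extension of the kernel $R(x,\cdot)$ from \eqref{ul}, namely $\widetilde R(x,t):=\tfrac12 R(x,|t|)$ for $|t|\le x$ and $\widetilde R(x,t):=0$ for $|t|>x$, and to extract every claimed smoothness property from the decay, on the real axis, of its Fourier transform. By Theorem~\ref{Th u(omega,x)} this $\widetilde R(x,\cdot)$ is even, continuous and supported in $[-x,x]$, hence lies in $L_2(\mathbb R)$; item (3) holds by construction; and \eqref{ul} shows that
\[
\mathcal F\!\left[\widetilde R(x,\cdot)\right]\!(\omega)=\int_0^x R(x,t)\cos\omega t\,dt=u_l(\omega,x)-d(\omega)b_l(\omega x)=:g_x(\omega).
\]
Now $W_2^{\sigma}(\mathbb R)$ is defined through the weight $(1+|\xi|^{2})^{\sigma/2}$ on the Fourier side, and, by Plancherel, the modulus of smoothness satisfies the classical equivalence
\[
\omega_r\!\left(\widetilde R(x,\cdot),t\right)_{L_2(\mathbb R)}^{2}\asymp\int_{|\xi|\ge 1/t}|g_x(\xi)|^{2}\,d\xi+t^{2r}\!\int_{|\xi|<1/t}|\xi|^{2r}|g_x(\xi)|^{2}\,d\xi
\]
(one may cite \cite[Chap.~2]{DeVoreLorentz}); this is the quantitative Paley--Wiener input referred to in the Introduction. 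Consequently items (1) and (2) both follow from one pointwise estimate for $g_x$ on $\mathbb R$.

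The decisive --- and hardest --- step is to prove that, under \eqref{Condition precise on q},
\[
|g_x(\omega)|\le C(x)\,|\omega|^{-(l+2-\alpha)}\qquad\text{for }\omega\in\mathbb R,\ |\omega|\ge1,
\]
with $C(x)$ depending on $q$ only through $\int_0^x t^{\alpha}\tilde q(t)\,dt$. I would obtain this from the high-energy expansions of the regular solution of the perturbed Bessel equation in \cite{KosSakhTesh2010} and \cite{FitouhiHamza}. These write $u_l(\omega,x)=d(\omega)b_l(\omega x)+\rho(\omega,x)$, where (for the fixed $x$, so that $\omega x$ is large together with $\omega$) the remainder $\rho$ is $d(\omega)$ --- which is of order $|\omega|^{-(l+1)}$ --- times an integral of $q(t)$ against a factor that remains bounded and, crucially, vanishes like $(\omega t)^{l+1}$ as $\omega t\to 0$. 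Splitting that integral at $t=1/\omega$ and invoking the moment condition $t^{\alpha}\tilde q(t)\in L_1(0,x)$ --- the logarithmic weight built into $\tilde q$, which is active only for $l=-1/2$, absorbing the logarithmic loss that otherwise appears there --- turns the prefactor together with the extra vanishing into exactly the power $|\omega|^{-(l+2-\alpha)}$. Carrying out this bookkeeping with the correct exponent and the correct dependence on the moment of $q$, uniformly for large real $\omega$, is the main obstacle; everything above it is a transcription of the cited asymptotic formulas.

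Granting the decay bound the remaining deductions are routine. Set $s:=l+3/2-\alpha$, which is positive by hypothesis; the bound reads $|g_x(\xi)|\lesssim|\xi|^{-(s+1/2)}$, and since $g_x$ is bounded on compacta, $(1+|\xi|)^{s-\varepsilon}g_x(\xi)=O\!\bigl((1+|\xi|)^{-1/2-\varepsilon}\bigr)\in L_2(\mathbb R)$ for every $\varepsilon>0$ (but not for $\varepsilon=0$), so $\widetilde R(x,\cdot)\in W_2^{s-\varepsilon}(\mathbb R)$ by the definition of the Bessel potential space. Feeding $|g_x(\xi)|\lesssim|\xi|^{-(s+1/2)}$ into the two integrals of the modulus-of-smoothness equivalence with $r=[s]+1$ makes each of them $O(t^{2s})$ --- the first because $2s>0$, the second because $r>s$ --- hence $\omega_r(\widetilde R(x,\cdot),t)_{L_2}\le M t^{s}$, i.e.\ $\widetilde R(x,\cdot)\in\operatorname{Lip}^*_{s}(\mathbb R,2)$, which is item (2). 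Finally, if $\alpha<l+1$ then $s-\tfrac12=l+1-\alpha>0$, so for all sufficiently small $\varepsilon$ one has $s-\varepsilon>\tfrac12$ and the one-dimensional Sobolev embedding $W_2^{s-\varepsilon}(\mathbb R)\hookrightarrow\operatorname{Lip}_{(s-\varepsilon)-1/2}(\mathbb R)=\operatorname{Lip}_{l+1-\alpha-\varepsilon}(\mathbb R)$ (any half-integer obstruction being dodged by further shrinking $\varepsilon$) gives the remaining assertion of item (1).
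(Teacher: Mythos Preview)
Your proposal is correct and follows essentially the same route as the paper: both reduce items (1) and (2) to the single pointwise decay estimate $|g_x(\omega)|\lesssim|\omega|^{-(l+2-\alpha)}$ on the real axis, and then read off the Sobolev and generalized Lipschitz memberships from Fourier-side characterizations of those spaces. The paper obtains that decay estimate by quoting \cite[Lemma~2.18]{KosSakhTesh2010} verbatim (the bound~\eqref{EstForg}) and then bounding the factor $\frac{y}{b+|\omega|y}$ by the elementary interpolation $\bigl(\tfrac{y}{b+|\omega|y}\bigr)^{1-\alpha}\bigl(\tfrac{1}{b+|\omega|y}\bigr)^{\alpha}\le |\omega|^{-(1-\alpha)}b^{-\alpha}$ rather than by splitting the integral at $t=1/|\omega|$; the outcome and level of difficulty are the same.

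One small logical difference is worth noting. You construct $\widetilde R$ by evenly extending $R$, invoking Theorem~\ref{Th u(omega,x)} for the existence and continuity of $R$; but that theorem was proved under the stronger hypothesis $q\in C[0,b]$, whereas Proposition~\ref{Prop Smooth R} assumes only~\eqref{Condition on q}. The paper runs the construction in the opposite direction: it defines $\widetilde R$ as the inverse Fourier transform of $g_x$, observes that $g_x$ is entire of exponential type $x$ (from the bound~\eqref{EstForg}, valid for all complex $\omega$) and in $L_2(\mathbb R)$ (from the real-axis decay), and applies the Paley--Wiener theorem to get $\operatorname{supp}\widetilde R\subset[-x,x]$ directly. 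Only afterwards does it compare the resulting cosine representation with~\eqref{ul} to conclude $R=2\widetilde R$. This ordering makes the proposition self-contained under~\eqref{Condition on q} alone and avoids any appeal to the continuity of $R$.
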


\begin{proof}
Consider the function
\[
g(\omega):= u_l(\omega, x) - d(\omega)b_l(\omega x).
\]
In \cite[Lemma 2.18]{KosSakhTesh2010} it was proved under the condition
$\int_0^b y\tilde q(y)\,dy <\infty$
(satisfied automatically whenever \eqref{Condition on q} holds)
that $g(\omega)$ is an entire function and for all $\omega \in\mathbb{C}$ satisfies the following estimate
\begin{equation}\label{EstForg}
    |g(\omega)|\le C\left(\frac{x}{b+|\omega|x}\right)^{l+1} e^{|\operatorname{Im}\omega| x} \int_0^x \frac{y\tilde q(y)}{b+|\omega|y} \,dy,
\end{equation}
where $C= C_l^2 \exp\left( C_l\int_0^b y\tilde q(y)\,dy\right)$ and the constant $C_l$ does not depend on $q$ and $x$.

Since $\frac t{b+|\omega|t} \le \frac 1{|\omega|}$, it follows from \eqref{Condition precise on q} and \eqref{EstForg} that
\begin{equation}\label{EstForgR}
    |g(\omega)|\le \frac C{b^\alpha|\omega|^{l+2-\alpha}}\int_0^x y^\alpha\tilde q(y)\,dy\le \frac {\tilde C}{|\omega|^{l+2-\alpha}},\qquad \omega\in\mathbb{R},
\end{equation}
showing that $g\in L^2(\mathbb{R})$. Applying the Paley-Wiener theorem \cite[Thm. VI.7.4]{Katznelson} we obtain that the Fourier transform of the function $g$ (which we denote by $\widetilde R$) is compactly supported on $[-x,x]$, i.e.,
\begin{equation}\label{g omega R}
g(\omega)= \int_{-x}^x \widetilde R(x,t) e^{i\omega t}\,dt.
\end{equation}
Note additionally that both functions $u_l(\omega,x)$ and $d(\omega)b_l(\omega x)$ are even functions of the real variable $\omega$, hence $\widetilde R$ is also even and
\begin{equation}\label{ulR}
    u_l(\omega, x) - d(\omega)b_l(\omega x) = g(\omega) = 2\int_0^x \widetilde R(x,t)\cos \omega t \,dt.
\end{equation}
Since the equalities \eqref{ul} and \eqref{ulR} hold for all $\omega$, we  conclude that $2\widetilde R = R$ a.e.\ for $0\le t\le x$.

The inclusion $\widetilde R\in W_2^{l+3/2-\alpha-\varepsilon}(\mathbb{R})$ follows from \eqref{EstForgR} and from the definition of Bessel potential spaces via the Fourier transform. The inclusion  $\widetilde R\in \operatorname{Lip}_{l+1-\alpha-\varepsilon}(\mathbb{R})$ follows from the embedding theorem $W_2^{\beta+1/2}(\mathbb{R})\subset \operatorname{Lip}_{\beta}(\mathbb{R})$ valid for any $\beta>0$, $\beta\not\in\mathbb{N}$,  see, e.g., \cite[Sect. 2.8.1]{Triebel1995}.

For the last statement of the proposition we use the following generalization of \cite[Theorem 85]{Titchmarsh}. Let $f$ belong to $L^2(\mathbb{R})$ and its Fourier transform $F$ satisfies $(\int_{-\infty}^{-X}+\int_X^\infty) |F(x)|^2dx \le C^2 X^{-2\beta}$ for some $\beta>0$ and all $X>0$. Then
\begin{equation}\label{omegar Estimate}
    \omega_r(f,h)_{L_2(\mathbb{R})}\le C_r h^\beta,\qquad r=[\beta]+1 \quad\text{and}\quad C_r = C\sqrt{2+\frac{r}{r-\beta}}.
\end{equation}
We omit the proof of this fact since it is similar to that of \cite{Titchmarsh} with the only difference that the equality $\int_{-\infty}^\infty |\Delta_h^r f(x)|^2\,dx = \int_{-\infty}^\infty \sin^{2r} xh\cdot |F(x)|^2\,dx$ is used. Inequality \eqref{EstForgR} implies that $(\int_{-\infty}^{-X}+\int_X^\infty) |g(\omega)|^2d\omega \le \frac{2\tilde C^2}{2l+3-2\alpha} X^{-(2l+3-2\alpha)}$ proving the inclusion
$\widetilde R\in \operatorname{Lip}^*_{l+3/2-\alpha}(\mathbb{R},2)$.
\end{proof}

\begin{remark}
It is possible to obtain the smoothness properties of the integral kernel $R$ directly from \eqref{R(0)} and \eqref{R=Const R0}. In particular, one may verify by somewhat lengthy calculations that $\widetilde R$, the continuation of $R$ onto $\mathbb{R}$ as an even compactly supported function of $t$, belongs to $\operatorname{Lip}_{1+l}(\mathbb{R})$, a slight improvement as compared to Proposition \ref{Prop Smooth R}.
Note that compared to \eqref{Condition on q} the condition \eqref{Condition precise on q} does not imply additional restrictions on $q$, it only specifies the order of the singularity at zero (if any).
\end{remark}

Let $P_{n}$ denote the Legendre polynomial of order $n$, $l_{k,n}$ be the
corresponding coefficient of $x^{k}$, that is $P_{n}(x)=\sum_{k=0}^{n}%
l_{k,n}x^{k}$.

\begin{theorem}
\label{Th Legendre for R}Let $q$ satisfy \eqref{Condition on q}. Then the kernel $R(x,t)$ has the form
\begin{equation}
R(x,t)=\sum_{n=0}^{\infty}\frac{\beta_{n}(x)}{x}P_{2n}\left(  \frac{t}%
{x}\right)  \label{R(x,t)}%
\end{equation}
with $\beta_{n}$ being defined by the equality
\begin{equation}
\beta_{n}(x)=\left(  4n+1\right)  \sum_{k=0}^{n}\frac{l_{2k,2n}}{x^{2k}%
}\left(  \varphi_{k}(x)-c_{k,l}x^{2k+l+1}\right), \label{beta n}
\end{equation}
and $c_{k,l}$ being given by \eqref{ckl}. For any $l\ge -1/2$, the series in \eqref{R(x,t)} converges in the $L_2$ norm.

Let additionally $q$ satisfy \eqref{Condition precise on q}. If $l>\alpha-1/2$ then for any $x\in(0,b]$ the series in \eqref{R(x,t)} converges uniformly with respect to $t\in\left[  0,x\right]  $; if $\alpha-1<l \le \alpha-1/2$,  $l\ge-1/2$, then for any $x\in (0,b]$ the series converges uniformly with respect to $t\in [0,x']\subset\left[  0,x\right)$.

Let
\begin{equation}\label{RN(x,t)}
    R_N(x,t):= \sum_{n=0}^N \frac{\beta_n(x)}{x} P_{2n}\left(\frac tx\right).
\end{equation}
There exist constants $C_1$ and $C_2$, dependent on $q$ and $l$ and independent of $x$ and $N$, such that for any $x>0$
\begin{equation}\label{L2 estimate R RN}
    \|R(x,\cdot) - R_N(x,\cdot)\|_{L_2[0,x]} \le \frac{C_1 x^{l+3/2-\alpha}}{N^{l+3/2-\alpha}},\qquad 2N\ge [l+5/2]
\end{equation}
and
\begin{equation}\label{betan est}
    |\beta_N(x)|\le \frac{C_2 x^{l+2-\alpha}}{(N-1)^{l+1-\alpha}},\qquad 2N\ge [l+9/2].
\end{equation}
\end{theorem}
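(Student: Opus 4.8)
The plan is to derive the Fourier--Legendre expansion \eqref{R(x,t)}--\eqref{beta n} directly from \eqref{phik=int}, and then to obtain the convergence statements and the quantitative estimates \eqref{L2 estimate R RN}--\eqref{betan est} from the smoothness of $\widetilde R$ established in Proposition \ref{Prop Smooth R}. For the expansion itself, fix $x>0$ and expand $R(x,\cdot)$ in the orthogonal basis $\{P_{2n}(t/x)\}$ of the even functions on $[-x,x]$ (equivalently, all of $L_2[0,x]$ after the even reflection): the coefficient of $P_{2n}(t/x)/x$ is $(4n+1)\int_0^x R(x,t)P_{2n}(t/x)\,dt$, and writing $P_{2n}(t/x)=\sum_{k=0}^n l_{2k,2n}t^{2k}/x^{2k}$ and invoking \eqref{phik=int} to evaluate $\int_0^x R(x,t)t^{2k}\,dt=\varphi_k(x)-c_{k,l}x^{2k+l+1}$ gives exactly \eqref{beta n}. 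Since $R(x,\cdot)=2\widetilde R(x,\cdot)\in L_2$ by Proposition \ref{Prop Smooth R}, the Legendre series converges in $L_2[0,x]$; this is the easy part.

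For the uniform convergence and rate estimates I would transfer everything to the real line via the substitution $t=xs$, so that $\widetilde R(x,x\,\cdot)$ is an even function supported in $[-1,1]$ whose Legendre coefficients are $(4n+1)$ times those appearing in \eqref{beta n}. The key input is that the decay rate of Legendre (equivalently Jacobi) coefficients of a function is governed by its smoothness, and the relevant constructive-approximation statement is a Jackson-type theorem: for $f\in\operatorname{Lip}^*_\gamma([-1,1],2)$ the error of best $L_2$ polynomial approximation of degree $m$ is $O(m^{-\gamma})$, hence the tail $\sum_{n>N}$ of the Legendre series has $L_2$ norm $O(N^{-\gamma})$ (see \cite[Chap.\ 7, Thm.\ 7.2]{DeVoreLorentz}, applied with the Gegenbauer/Legendre weight). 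Proposition \ref{Prop Smooth R}(2) gives $\gamma=l+3/2-\alpha$, and rescaling back to $[0,x]$ reintroduces the factor $x^{l+3/2-\alpha}$ (one power of $x$ from $dt=x\,ds$, combined with the homogeneity of the norm), yielding \eqref{L2 estimate R RN}. For pointwise/uniform convergence one uses instead Proposition \ref{Prop Smooth R}(1): when $l>\alpha-1/2$ we have $\widetilde R\in W_2^{1/2+\delta}(\mathbb{R})$ for some $\delta>0$, hence $\widetilde R$ is continuous (indeed in $\operatorname{Lip}_\delta$) and, being supported in $[-x,x]$, its Legendre expansion on $[-x,x]$ converges uniformly — this follows from the Lipschitz regularity together with the standard bound on the Lebesgue function for Legendre partial sums, or more directly from Jackson's theorem in $C$ combined with a Bernstein-type estimate; in the borderline range $\alpha-1<l\le\alpha-1/2$ the regularity $\operatorname{Lip}_{l+1-\alpha-\varepsilon}$ degenerates near the endpoints $t=\pm x$ (where $R$ need not be continuous, reflecting the $V(x,x)=Q(x)/2$ jump after reflection), so one only gets uniform convergence on $[0,x']$ for $x'<x$ by a localization argument away from the endpoints.

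Finally, the individual coefficient bound \eqref{betan est} comes from comparing two consecutive $L_2$-truncation errors: $\|\beta_N(x)P_{2N}(\cdot/x)/x\|_{L_2[0,x]}^2 = \beta_N(x)^2/(x(4N+1))$, and since this equals $\|R_N - R_{N-1}\|_{L_2[0,x]}^2 \le 2(\|R - R_N\|^2 + \|R-R_{N-1}\|^2)$, plugging in \eqref{L2 estimate R RN} for indices $N$ and $N-1$ gives $|\beta_N(x)|\lesssim \sqrt{N}\cdot x^{1/2}\cdot x^{l+3/2-\alpha} N^{-(l+3/2-\alpha)} = C_2 x^{l+2-\alpha} N^{-(l+1-\alpha)}$, which is \eqref{betan est} up to replacing $N$ by $N-1$ in the denominator (harmless for the stated range of $N$). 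The main obstacle, I expect, is making the endpoint-regularity bookkeeping precise: one must be careful that the $\operatorname{Lip}^*_\gamma$ class in Proposition \ref{Prop Smooth R} is the $L_2$-modulus version, so the Jackson theorem must be the $L_2$ (weighted) one, and the passage from "$f\in\operatorname{Lip}^*_\gamma(\mathbb{R},2)$ with $\operatorname{supp}f\subset[-x,x]$" to "the $L_2[-x,x]$ best polynomial approximation error is $O(N^{-\gamma})$" needs the observation that the global modulus of smoothness on $\mathbb{R}$ controls the modulus on $[-x,x]$, together with the fact that the Gegenbauer weight $(1-s^2)^{0}$ for Legendre is benign — no weight singularity — so no extra loss occurs in the interior, while near the endpoints the weaker (or absent) regularity is exactly what forces the restriction $l>\alpha-1/2$ for uniform convergence up to the endpoint.
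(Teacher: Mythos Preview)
Your proposal is essentially correct and follows the paper's own route closely: derive \eqref{beta n} from the moment identities \eqref{phik=int}, pull the smoothness from Proposition~\ref{Prop Smooth R}, feed it into a Jackson-type theorem in $L_2$ (the paper cites \cite[Chap.~7, Thm.~6.3]{DeVoreLorentz}) to obtain \eqref{L2 estimate R RN}, and then deduce \eqref{betan est} from the $L_2$-tail bound.

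Two minor points where you diverge from the paper. First, your parenthetical explanation of the borderline case $\alpha-1<l\le\alpha-1/2$ is not right: there is no ``$V(x,x)=Q(x)/2$ jump after reflection'' for $\widetilde R$ (indeed $\widetilde R\in W_2^{l+3/2-\alpha-\varepsilon}(\mathbb{R})$ globally, compactly supported, hence in particular continuous across $t=\pm x$). The real issue is only that the Lipschitz exponent $l+1-\alpha-\varepsilon$ may fall to $\le 1/2$, below the threshold needed for uniform convergence of the Legendre series on the closed interval; the paper simply invokes \cite[Thm.~4.10]{Suetin} for the case $l>\alpha-1/2$ and \cite[Cor.\ to Thm.~XIII]{Jackson} for uniform convergence on compact subsets of $(-x,x)$ in the borderline range. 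Second, for \eqref{betan est} the paper uses a slightly cleaner device than your triangle-inequality split: since $R_{N-1}$ is an even polynomial of degree $<2N$ it is orthogonal to $P_{2N}(\cdot/x)$, so
\[
\beta_N(x)=(4N+1)\int_0^x\bigl(R(x,t)-R_{N-1}(x,t)\bigr)P_{2N}(t/x)\,dt,
\]
and Cauchy--Schwarz together with $\|P_{2N}(\cdot/x)\|_{L_2[0,x]}=\sqrt{x/(4N+1)}$ gives the bound directly. Your argument via $\|R_N-R_{N-1}\|$ yields the same estimate, just with an extra factor of $2$ absorbed into $C_2$.
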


\begin{proof}
For any $x\in(0,b]$ the kernel $R(x,\cdot)\in L_2[0,x]$. Hence it admits a Fourier-Legendre series representation of the form
$\sum_{j=0}^{\infty}A_{j}(x)P_{2j}\left(  \frac{t}{x}\right)  $. For
convenience we consider $A_{j}(x)=\frac{\beta_{j}(x)}{x}$. Note that
\begin{equation*}
\int_{0}^{x}R(x,t)P_{2n}\left(  \frac{t}{x}\right)  \,dt    =\sum
_{j=0}^{\infty}\frac{\beta_{j}(x)}{x}\int_{0}^{x}P_{2j}\left(  \frac{t}%
{x}\right)  P_{2n}\left(  \frac{t}{x}\right)  \,dt  =\frac{\beta_{n}(x)}{4n+1}.
\end{equation*}
Hence $\beta_{n}(x)=\left(  4n+1\right)  \int_{0}^{x}R(x,t)P_{2n}\left(
\frac{t}{x}\right)  \,dt$. On the other hand we have%
\begin{equation*}
\int_{0}^{x}R(x,t)P_{2n}\left(  \frac{t}{x}\right)  \,dt    =\sum_{k=0}%
^{n}\frac{l_{2k,2n}}{x^{2k}}\int_{0}^{x}R(x,t)t^{2k}\,dt
  =\sum_{k=0}^{n}\frac{l_{2k,2n}}{x^{2k}}\left(  \varphi_{k}(x)-c_{k,l}%
x^{2k+l+1}\right)
\end{equation*}
where (\ref{phik=int}) was used. Thus, (\ref{beta n}) is obtained. Note that $q$ does not need to be continuous on $[0,b]$ for the equality \eqref{phik=int} to hold, the condition \eqref{Condition on q} is sufficient. Indeed, the functions $\varphi_k$ are defined by the same formulas \eqref{Xtilde} (their validity under the condition \eqref{Condition on q} can be verified similarly to \cite{BCK2015}), the SPPS representation \eqref{u via SPPS} and the integral representation \eqref{ul} hold, and the proof from Section \ref{Sect Transmut Bessel} can be easily repeated.

Now let additionally $q$ satisfy \eqref{Condition precise on q}.
Consider the restriction of the function $\widetilde R$ from Proposition \ref{Prop Smooth R} to the segment $[-x,x]$. Since it is an even function, its Fourier-Legendre series contains only even terms and due to the equality $R(x,t) = 2\widetilde R(x,t)$, $0\le t\le x$ one has $\widetilde R(x,t) = \sum_{n=0}^\infty \frac{\beta_n(x)}{2x}P_{2n}\left(\frac tx\right)$, where the series converges in $L_2[-x,x]$.

Theorem 4.10 from \cite{Suetin} states that if a function $g\in \operatorname{Lip}_{\beta}[-1,1]$, where $\beta > 1/2$, then the partial sums of the Fourier-Legendre series of $g$ converge uniformly on $[-1,1]$ to the function $g$. By Proposition \ref{Prop Smooth R}, $\widetilde R\in \operatorname{Lip}_{1+l-\alpha-\varepsilon}(\mathbb{R})$, hence its restriction onto $[-x,x]$ belongs to $\operatorname{Lip}_{1+l-\alpha-\varepsilon}[-x,x]$, which is sufficient to establish the uniform convergence of the series \eqref{R(x,t)} for any $l>\alpha-1/2$. For $l>\alpha-1$, $l\ge -1/2$, \cite[Corollary to Theorem XIII]{Jackson} asserts the uniform convergence of the Fourier-Legendre series of the function $\widetilde R$ on any $[-x+\varepsilon,x-\varepsilon]\subset(-x,x)$, i.e., the series \eqref{R(x,t)} converges uniformly with respect to $t\in[0,x-\varepsilon]\subset[0,x)$ for any $\varepsilon>0$.

Consider the functions $g(z):=2\widetilde R(x,xz)$ and $g_N(z):=R_N(x,xz)$, $z\in[-1,1]$. The function $g_N$ is a polynomial of degree $2N$ and is a partial sum of the Fourier-Legendre series of $g$, i.e., $g_N$ coincides with the polynomial of the best $L_2[-1,1]$ approximation of the function $g$ by polynomials of degree $2N$. Hence by Theorem 6.3 from \cite[Chap. 7]{DeVoreLorentz} for any $r\in\mathbb{N}$ there exists a universal constant $C_r$ such that $\|g-g_N\|_{L_2[-1,1]}\le C_r\omega_r(g, \frac 1{2N})_{L_2[-1,1]}$, $2N\ge r$. We take $r=[l+3/2-\alpha]+1$. Then using the estimates \eqref{EstForgR} and \eqref{omegar Estimate} we obtain  that
\begin{equation*}
    \omega_r\left(g,\frac 1{2N}\right)_{L_2[-1,1]} =
    \frac 1{\sqrt x} \omega_r\left(\widetilde R,\frac x{2N}\right)_{L_2[-x,x]}\le
    \frac 1{\sqrt x} \omega_r\left(\widetilde R,\frac x{2N}\right)_{L_2(\mathbb{R})}\le \frac{C(q)}{\sqrt{x}}\left(\frac{x}{2N}\right)^{l+3/2-\alpha},
\end{equation*}
where the constant $C(q)$ depends neither on $x$ nor on $N$. To finish the proof of \eqref{L2 estimate R RN}, note that $\|R(x,\cdot)-R_N(x,\cdot)\|_{L_2[0,x]} = \frac 12\|2\widetilde R(x,\cdot)-R_N(x,\cdot)\|_{L_2[-x,x]} = \frac 12\sqrt{x}\|g-g_N\|_{L_2[-1,1]}$.

To prove the estimate \eqref{betan est} we proceed as follows.
\begin{equation*}
    \begin{split}
       |\beta_N(x)| &= (4N+1) \left|\int_0^x R(x,t) P_{2N}\left(\frac tx\right)\,dt\right|  \\
       &=  (4N+1) \left|\int_0^x \bigl(R(x,t)-R_{N-1}(x,t)\bigr) P_{2N}\left(\frac tx\right)\,dt\right| \\
         & \le (4N+1) \| R(x,\cdot) - R_{N-1}(x,\cdot)\|_{L_2[0,x]}\cdot \sqrt{\frac{x}{4N+1}}\le \frac {C_1\sqrt{4N+1} \cdot x^{l+2-\alpha}}{(N-1)^{l+3/2-\alpha}}\le \frac{C_2 x^{l+2-\alpha}}{(N-1)^{l+1-\alpha}},
     \end{split}
\end{equation*}
where we used the Cauchy-Schwarz inequality and the fact that $R_{N-1}$ is a polynomial in even powers of $t$ of degree lower than $2N$, hence orthogonal to $P_{2N}$.
\end{proof}

Note that the estimates \eqref{L2 estimate R RN} and \eqref{betan est} do not depend on the smoothness of the potential $q$. In Subsection \ref{Subsect decay analysis} we study the behavior of the coefficients $\beta_n$ numerically
and observe that for some potentials the actual degree of $N$ in the decay rate of the coefficients $\beta_n$ is $2l+3$, higher than $l+1-\alpha$ in \eqref{betan est}. Below we give a proof for such improved decay rate requiring $q$ to be sufficiently smooth.
Additionally, in Subsection \ref{Subsect decay analysis} we observe  that in the special case $l\in\mathbb{N}_0:=\mathbb{N}\cup\{0\}$ the coefficients $\beta_n$ decay much faster than for $l\not\in\mathbb{N}_0$ and that the decay rate depends on the smoothness of the potential $q$. Below we present a theoretical justification of this phenomenon. We need the following lemma first.

\begin{lemma}\label{Lemma beta k decay special}
Let $l\not\in\mathbb{N}_0$ and $k\in\mathbb{N}_0$. Then there exists a constant $c_k$ such that the following inequalities hold
\begin{equation}\label{beta k decay special}
\biggl|\int_0^x \left(1-\frac{t^2}{x^2}\right) ^{l+k}P_{2n}\left(\frac tx\right) \,dt\biggr| \le \frac{c_kx}{n^{2l+2k+2}},\qquad n\ge l+k+2.
\end{equation}
\end{lemma}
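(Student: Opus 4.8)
The plan is to evaluate the integral in \eqref{beta k decay special} explicitly and then estimate the resulting closed-form expression using standard asymptotics of the Gamma function. First I would substitute $t=xs$ to reduce the integral to $x\int_{-1}^{1}(1-s^2)^{l+k}P_{2n}(s)\,ds$ (using that the integrand is even, the integral over $[-1,1]$ is twice that over $[0,1]$, so it suffices to control $\int_{-1}^1$). The key classical fact is the Rodrigues-type formula $P_{2n}(s)=\frac{1}{2^{2n}(2n)!}\frac{d^{2n}}{ds^{2n}}(s^2-1)^{2n}$; integrating by parts $2n$ times and discarding the boundary terms (they vanish because $(1-s^2)^{l+k}$ has a zero of fractional order $l+k>0$ at $s=\pm 1$, and its derivatives up to the relevant order also vanish there provided $l+k$ is large enough, which it is for $n\ge l+k+2$ — this is exactly why $l\notin\mathbb N_0$ is needed, so that $(1-s^2)^{l+k}$ is never a polynomial and the differentiation genuinely lowers the power), one obtains
\[
\int_{-1}^{1}(1-s^2)^{l+k}P_{2n}(s)\,ds = \frac{(-1)^{2n}}{2^{2n}(2n)!}\int_{-1}^{1}(1-s^2)^{2n}\,\frac{d^{2n}}{ds^{2n}}\!\left[(1-s^2)^{l+k}\right]ds,
\]
and then recognizing that the resulting integral is a Beta-type integral that evaluates to a quotient of Gamma functions. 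In fact this is the standard formula $\int_{-1}^1 (1-s^2)^{\mu}P_{2n}(s)\,ds = \frac{(-1)^n\Gamma(\mu+1)^2\,2^{2\mu+1}}{\Gamma(\mu+1-n)\Gamma(\mu+1+n+1)\cdot(\text{something})}$; rather than reconstruct it from memory I would derive it by the integration-by-parts argument above or quote it from a standard table (Gradshteyn–Ryzhik). The upshot is a clean expression of the form
\[
\int_{0}^{x}\left(1-\frac{t^2}{x^2}\right)^{l+k}P_{2n}\left(\frac tx\right)dt = x\cdot\frac{\Gamma(l+k+1)^2}{\Gamma(l+k+2+n)\,\Gamma(l+k+2-n)}\cdot(\text{a bounded, explicit factor}),
\]
valid once $n\ge l+k+2$ so that all the arguments of the Gamma functions are away from their poles (note $\Gamma(l+k+2-n)$ with $n$ large is a Gamma at a negative noninteger argument, finite precisely because $l\notin\mathbb N_0$).

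Next I would extract the decay rate. Using the reflection formula $\Gamma(l+k+2-n)\Gamma(n-l-k-1)=\pi/\sin\pi(l+k+2-n)=\pm\pi/\sin\pi l$, the product $\frac{1}{\Gamma(l+k+2+n)\Gamma(l+k+2-n)}$ becomes $\frac{\sin\pi l}{\pi}\cdot\frac{\Gamma(n-l-k-1)}{\Gamma(n+l+k+2)}$ up to sign, and then the elementary asymptotic $\Gamma(n+a)/\Gamma(n+b)\sim n^{a-b}$ as $n\to\infty$ gives $\frac{\Gamma(n-l-k-1)}{\Gamma(n+l+k+2)}\sim n^{-(2l+2k+3)}$. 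Combined with the remaining bounded factors this would actually suggest decay of order $n^{-(2l+2k+2)}$ or faster; to get the clean bound $c_k x / n^{2l+2k+2}$ claimed in the lemma, I would simply use the crude monotonicity bound $\Gamma(n-l-k-1)/\Gamma(n+l+k+2)\le C_k n^{-(2l+2k+2)}$ for $n\ge l+k+2$ — this follows from splitting $\Gamma(n+l+k+2) = (n-l-k-2)(n-l-k-1)\cdots$ products of $(2l+2k+3)$ consecutive factors each at least of size $\sim n$, and is elementary. All the constants depending on $l$ and $k$ (but not on $n$ or $x$) get absorbed into $c_k$.

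The main obstacle will be justifying the vanishing of the boundary terms in the repeated integration by parts cleanly and, relatedly, making sure the Beta-integral identity is applied in the correct range of parameters; $(1-s^2)^{l+k}$ is only $C^{\lfloor l+k\rfloor}$ up to the endpoints, so one can integrate by parts directly only $\lfloor l+k\rfloor+1$ times before the boundary terms involve a genuine singularity. The clean fix is to go the other way: write $P_{2n}$ via Rodrigues, integrate by parts $2n$ times moving all derivatives onto $(1-s^2)^{l+k}$ — here the differentiated factor $(1-s^2)^{2n}$ in each boundary term vanishes to order $2n\ge 2$ at $\pm1$, so all $2n$ boundary contributions are zero without any regularity demand on $(1-s^2)^{l+k}$ beyond continuity. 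The surviving integral $\int_{-1}^1 (1-s^2)^{2n}\frac{d^{2n}}{ds^{2n}}(1-s^2)^{l+k}\,ds$ is then handled by noting $\frac{d^{2n}}{ds^{2n}}(1-s^2)^{l+k}$ is $(1-s^2)^{l+k-2n}$ times a polynomial, reducing everything to a finite sum of honest Beta integrals $\int_{-1}^1(1-s^2)^{l+k+m}s^{2n-2m}\,ds$ with $0\le m\le n$, each equal to $B(n-m+\tfrac12, l+k+m+1)$ in terms of Gamma functions; summing and simplifying (or just bounding term by term) yields the stated estimate. I expect the bookkeeping in this last summation to be the only slightly tedious part, but it is routine once the boundary terms are dispatched.
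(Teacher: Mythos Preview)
Your approach---evaluate the integral in closed form and read off the decay from Gamma asymptotics---is sound and genuinely different from the paper's. The paper instead expands $(1-t^2/x^2)^{l+k}$ as a binomial series, integrates each monomial against $P_{2n}$ via a tabulated formula, and is left with an infinite sum $\sum_{m\ge n}a_m$ with $a_m=\frac{\Gamma(m-l-k)\Gamma(m+1/2)}{\Gamma(m-n+1)\Gamma(m+n+3/2)}$; it then locates the maximal term of this positive series (near $m\approx n^2/(l+k+2)$) and bounds the whole sum by Stirling. Your route is shorter once the closed form is in hand, and in fact the paper's series is a ${}_2F_1$ at unit argument, summable by Gauss to yield
\[
\int_0^x\Bigl(1-\frac{t^2}{x^2}\Bigr)^{l+k}P_{2n}\Bigl(\frac tx\Bigr)\,dt
=\frac{x\,\Gamma(l+k+1)}{2\,\Gamma(-l-k)}\cdot\frac{\Gamma(n-l-k)\,\Gamma\bigl(n+\tfrac12\bigr)}{\Gamma\bigl(n+l+k+\tfrac32\bigr)\,\Gamma(n+1)},
\]
whose asymptotic order is exactly $n^{-(2l+2k+2)}$, so the lemma is sharp. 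Two cautions about your write-up: (i) the integration-by-parts justification tacitly assumes $l+k>0$; for $l=-\tfrac12,\ k=0$ the factor $(1-s^2)^{l+k}$ is itself unbounded at $\pm1$, and what actually rescues the argument is that in each boundary term the \emph{product} with the differentiated Rodrigues factor behaves like $(1-s^2)^{l+k+1}\to0$; (ii) the Gamma arrangement you guess, $[\Gamma(\mu+2+n)\Gamma(\mu+2-n)]^{-1}$, is not quite the right shape---the true answer carries an extra $\Gamma(n+\tfrac12)/\Gamma(n+1)\sim n^{-1/2}$, and it is this factor that brings the exponent from the $2\mu+3$ you first compute down to the correct $2\mu+2$.
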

\begin{proof}
Using the Taylor series for the function $\left(1-\frac{t^2}{x^2}\right) ^{l+k}$ and the formula \cite[2.17.1]{Prudnikov} we obtain that
\[
\begin{split}
\int_0^x \left(1-\frac{t^2}{x^2}\right) ^{l+k}P_{2n}\left(\frac tx\right) \,dt &=  \sum_{m=n}^\infty (-1)^m \binom{l+k}{m} \frac 1{x^{2m}} \int_0^x t^{2m}P_{2n}\left(\frac tx\right) \,dt \\
&= \frac{x}{2 \Gamma(-l-k)}\sum_{m=n}^\infty \frac{\Gamma(m-l-k)\Gamma(m+1/2)}{\Gamma(m-n+1)\Gamma(m+n+3/2)}.
\end{split}
\]
Denote the terms  of the last series as $a_m$. We have that $a_m\ge 0$, and one can check by a simple verification that $\frac{a_{m+1}}{a_m}\ge 1$ for $m\le\frac{n^2-3/2}{l+k+2}$ and $\frac{a_{m+1}}{a_m}<1$ for $m>\frac{n^2-3/2}{l+k+2}\ge n-1$. Using the asymptotic formula \cite[(6.1.40)]{Abramowitz}, $\log \Gamma(z) = \bigl(z-\frac 12\bigr)\log z - z+\frac 12 \log 2\pi + O\bigl(\frac 1z\bigr)$, we can check that
\[
\log a_m = -(l+k+2)\log m + O\left(\frac 1 m\right),\qquad m\ge n,
\]
with the uniform bound of the error term. Hence $a_m\le \frac{c}{m^{k+l+2}}$, $m\ge n$ and
\[
\begin{split}
\sum_{m=n}^\infty a_m &\le \sum_{m=n}^{\bigl[\frac{n^2-3/2}{l+k+2}+1\bigr]} a_{\left[\frac{n^2-3/2}{l+k+2}+1\right]} + \sum_{m=\left[\frac{n^2-3/2}{l+k+2}+2\right]}^\infty \frac c{m^{l+k+2}}\\
&\le \frac{n^2}{l+k+2}\frac{c(l+k+2)^{l+k+2}}{(n^2-3/2)^{l+k+2}} + \frac{c}{l+k+1}\frac{(l+k+2)^{l+k+1}}{(n^2-3/2)^{l+k+1}},
\end{split}
\]
finishing the proof.
\end{proof}

\begin{proposition}\label{Prop Int l}
Suppose that $l\in \mathbb{N}$ and $q\in C^{2p-1}[0,b]$ for some $p\in\mathbb{N}$. Then the function $\widetilde R$ from Proposition \ref{Prop Smooth R} satisfies $\widetilde R\in \operatorname{Lip}_{l+1+p-\varepsilon}([-x,x])$ for any $\varepsilon>0$ and
$\widetilde R\in \operatorname{Lip}^*_{l+p+3/2}([-x,x],2)$. Moreover, there exist constants $c_1$ and $c_2$, dependent on $q$, $l$ and $p$, such that for any $x>0$ the inequalities hold
\begin{equation}\label{L2 estimate R RN Int l}
    \| R(x,\cdot) - R_N(x,\cdot)\|_{L_2[0,x]}\le \frac{c_1 x^{l+p+3/2}}{N^{l+p+3/2}},\qquad 2N\ge [l+p+5/2]
\end{equation}
and
\begin{equation}\label{betan estimate Int l}
    |\beta_N(x)|\le \frac{c_2 x^{l+p+2}}{(N-1)^{l+p+1}},\qquad 2N\ge [l+p+9/2].
\end{equation}

Suppose that $l\not\in\mathbb{N}_0$ and $q\in C^{2p-1}[0,b]$. Then there exist constants $c_3$ and $c_4$ such that for any $x>0$ the inequalities hold
\begin{equation}\label{L2 estimate R RN NonInt l}
    \| R(x,\cdot) - R_N(x,\cdot)\|_{L_2[0,x]}\le \frac{c_3 x^{l+3/2}}{N^r},\qquad N\ge [l+p+3]
\end{equation}
and
\begin{equation}\label{betan estimate NonInt l}
    |\beta_N(x)|\le \frac{c_4 x^{l+2}}{N^r},\qquad N\ge [l+p+3],
\end{equation}
where $r = \min\{l+p+1, 2l+3\}$.
\end{proposition}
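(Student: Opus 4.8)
The plan is to follow the scheme of Theorem~\ref{Th Legendre for R}, where the decay of the Fourier--Legendre coefficients $\beta_n(x)=(4n+1)\int_0^x R(x,t)P_{2n}(t/x)\,dt$ is deduced from the behaviour of the kernel $R(x,\cdot)$; the new point is to exploit the smoothness of $q$ through a local expansion of this kernel at the endpoint $t=x$. Fix $x\in(0,b]$. The key structural step is to establish, under $q\in C^{2p-1}[0,b]$, a decomposition
\[
R(x,t)=\sum_{k=1}^{K}\gamma_k(x)\Bigl(1-\tfrac{t^2}{x^2}\Bigr)^{l+k}+\rho(x,t),\qquad 0\le t\le x,
\]
with $K=K(l,p)$, where $\gamma_1(x)$ is a constant multiple of $Q(x)$ and the remaining $\gamma_k(x)$ involve finitely many derivatives of $q$, and the remainder $\rho(x,\cdot)$, evenly extended to $[-x,x]$, lies in $\operatorname{Lip}_{l+p+1-\varepsilon}([-x,x])$ for every $\varepsilon>0$ and in $\operatorname{Lip}^*_{l+p+3/2}([-x,x],2)$. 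One way to obtain this is directly from \eqref{R(0)}--\eqref{R=Const R0}: the substitution $\tau=x-(x-t)w$, $w\in[0,1]$, turns the inner integral into $(x-t)^{l+1}$ times an integral over $w$ of $V(x,\cdot)$, a negative power of $x-\cdot$, and a factor analytic in $x-t$; expanding in powers of $x-t$ and using the regularity of the transmutation kernel $V$ near its diagonal supplied by $q\in C^{2p-1}$, and then passing between $(x-t)^{l+k}$ and $\bigl(1-t^2/x^2\bigr)^{l+k}$ (which differ by a factor smooth and non-vanishing on $[0,x]$, chosen so as to preserve the even structure of $\widetilde R$ at $t=0$), gives the claim. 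Alternatively the same decomposition results by Fourier-inverting \eqref{ulR}, the coefficients $\gamma_k(x)$ then being read off from the large-$\omega$ asymptotic expansion of $u_l(\omega,x)$ established in \cite{FitouhiHamza, KosSakhTesh2010} under $q\in C^{2p-1}$, whose remainder decays like $|\omega|^{-(l+p+2)}$ and hence yields the stated smoothness of $\rho$ exactly as in the proof of Proposition~\ref{Prop Smooth R}.

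For $l\in\mathbb{N}$ each exponent $l+k$ is an integer, so every $\bigl(1-t^2/x^2\bigr)^{l+k}$ is an even polynomial of degree at most $2(l+K)$; consequently the finite sum in the decomposition contributes to only finitely many $\beta_n(x)$, and on the \emph{closed} interval $\widetilde R$ simply inherits the smoothness of $\rho$, giving $\widetilde R\in\operatorname{Lip}_{l+1+p-\varepsilon}([-x,x])$ and $\widetilde R\in\operatorname{Lip}^*_{l+p+3/2}([-x,x],2)$. From here the two estimates follow exactly as in the proof of Theorem~\ref{Th Legendre for R}: writing $\mathfrak g(z)=2\widetilde R(x,xz)$ and $\mathfrak g_N(z)=R_N(x,xz)$, the function $\mathfrak g_N$ is the polynomial of best $L_2[-1,1]$ approximation of degree $2N$ to $\mathfrak g$, so Theorem~6.3 of \cite[Chap.~7]{DeVoreLorentz} together with the $\operatorname{Lip}^*$ bound gives \eqref{L2 estimate R RN Int l}, while bounding $|\beta_N(x)|=(4N+1)\bigl|\int_0^x\bigl(R-R_{N-1}\bigr)P_{2N}(t/x)\,dt\bigr|$ by the Cauchy--Schwarz inequality and the orthogonality of the even polynomial $R_{N-1}$ to $P_{2N}$ gives \eqref{betan estimate Int l}.

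For $l\notin\mathbb{N}_0$ the exponents $l+k$ are non-integral, so the terms $\bigl(1-t^2/x^2\bigr)^{l+k}$ carry genuine endpoint singularities that no smoothness of $q$ removes, and they are estimated separately. The remainder $\rho$ is treated exactly as in the case $l\in\mathbb{N}$ (constructive approximation plus Cauchy--Schwarz), contributing to $\|R-R_N\|_{L_2[0,x]}$ and to $|\beta_N|$ at the rate $N^{-(l+p+1)}$ (with the powers of $x$ absorbed into the constant using $x\le b$). For the singular terms, the Fourier--Legendre coefficient of the $k$-th is $(4n+1)\gamma_k(x)\int_0^x\bigl(1-t^2/x^2\bigr)^{l+k}P_{2n}(t/x)\,dt$, which by Lemma~\ref{Lemma beta k decay special} is $O\bigl(n^{-(2l+2k+1)}\bigr)$ for $n\ge l+k+2$; the slowest, corresponding to $k=1$, is $O\bigl(n^{-(2l+3)}\bigr)$, and the tail of its squared coefficients contributes to $\|R-R_N\|_{L_2[0,x]}$ at the rate $N^{-(2l+3)}$. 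Adding the two contributions and absorbing the index restrictions (from the smooth part and from the bound $n\ge l+K+2$ in Lemma~\ref{Lemma beta k decay special}) into $N\ge[l+p+3]$ yields \eqref{L2 estimate R RN NonInt l} and \eqref{betan estimate NonInt l} with $r=\min\{l+p+1,\,2l+3\}$.

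The hard part is the structural decomposition itself: showing that the hypothesis $q\in C^{2p-1}[0,b]$ supplies precisely the regularity of $V$ near its diagonal --- equivalently, precisely the number of terms of the asymptotic expansion of $u_l(\omega,x)$ --- needed to reach a remainder $\rho$ of smoothness $l+p+1$, and carrying this out while keeping track of the even structure of $\widetilde R$ at $t=0$ when rewriting $(x-t)^{l+k}$ in the symmetric form. Everything after that is the by-now routine combination of the Paley--Wiener/Lipschitz-space reasoning of Proposition~\ref{Prop Smooth R}, the constructive-approximation reasoning of Theorem~\ref{Th Legendre for R}, and Lemma~\ref{Lemma beta k decay special}.
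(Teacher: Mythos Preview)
Your proposal is correct and, in its option (b), is essentially the paper's own proof: the paper uses the Fitouhi--Hamza asymptotic expansion $u_l(\omega,x)=\sum_{k=0}^{p}A_k(x)\,\omega^{-(l+k+1/2)}\sqrt{x}\,J_{l+k+1/2}(\omega x)+\mathcal{R}_p(\omega,x)$, applies the Paley--Wiener argument of Proposition~\ref{Prop Smooth R} to $\mathcal{R}_p$, and then rewrites each Bessel term via the Poisson integral as a multiple of $(1-t^2/x^2)^{l+k}$, arriving at exactly your decomposition with $\rho=\hat{\mathcal R}$ and $\gamma_k(x)=A_k(x)x^{l+k}/(\sqrt{\pi}\,2^{l+k+1/2}\Gamma(l+k+1))$; the rest (integer $l$ gives polynomial correction, non-integer $l$ handled term-by-term via Lemma~\ref{Lemma beta k decay special}) is identical. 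Your option (a), working directly from \eqref{R(0)}--\eqref{R=Const R0} and the diagonal regularity of $V$, is not pursued in the paper.

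The one place where the paper does real work that your sketch only names is what you call ``the hard part'': the paper proves that $q\in C^{2p-1}[0,b]$ (and not $C^{2p+1}$, as a naive count would suggest) already suffices for the remainder bound $|\mathcal{R}_p(\omega,x)|\le c\,|\omega|^{-(l+p+2)}\int_0^x|(t^{p+1}B_{p+1}(t))'|\,dt$ to be finite. This is done by writing $A_k(x)=x^kB_k(x)$ with the recursion $B_{k+1}=-\tfrac12 H_{k+1}[B_k''-2lH_1B_k''-qB_k]$ (where $H_pf(x)=x^{-p}\int_0^x t^{p-1}f(t)\,dt$), observing that $q\in C^{2k+j}$ makes $B_{k+1}^{(j)}$ well-defined, and then checking by hand that the specific combination $(x^{p+1}B_{p+1})'$ needs only $q\in C^{2p-1}$ because the apparently worst term $(x^pB_p)''$ can itself be unwound one level. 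If you plan to carry out option (a) instead, you would need an analogous careful count of how many derivatives of $q$ near the diagonal of $V$ are actually consumed to reach a remainder of order $(x-t)^{l+p+1}$.
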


\begin{proof}
In \cite{FitouhiHamza} the following asymptotic expansion for $u_l(\omega,x)$ was obtained
\begin{equation}\label{ul asympt}
    u_l(\omega, x) = \sum_{k=0}^m A_k(x) \frac{\sqrt{x}J_{l+k+1/2}(\omega x)}{\omega^{l+k+1/2}}+ \mathcal{R}_{m}(\omega,x).
\end{equation}
The coefficients $A_k$ are defined as follows. Consider the operators $H_p$, $p=1,2,\ldots$, acting as
\[
(H_pf)(x) = \begin{cases}
\frac 1{x^p}\int_0^x t^{p-1} f(t)\,dt, & \text{if } x\ne 0,\\
\frac 1p f(0), &\text{if } x=0.
\end{cases}
\]
One can verify that if $f\in C^{(r)}[0,b]$ then $H_pf\in C^{(r)}[0,b]$ and $(H_pf)^{(r)} = H_{p+r}(f^{(r)})$, $r=0,1,2,\ldots$. Let $A_k(x) =: x^k B_k(x)$. Then the functions $B_k$ satisfy the following recursive relations
\begin{equation}
    B_0 = 2^{l+1/2}\Gamma(l+3/2)\label{B0}\\
\end{equation}
(with such choice of $B_0$ the first term of \eqref{ul asympt} coincides with $d(\omega)b_l(\omega x)$) and
\begin{equation}
    B_{k+1} = -\frac 12 H_{k+1}\bigl[B_k''-2l H_1 B_k''-qB_k\bigr].\label{Bk}
\end{equation}
Moreover, their derivatives satisfy the equalities
\begin{equation}\label{Bk der}
    B_{k+1}^{(j)} = -\frac 12 H_{k+j+1}\bigl[B_k^{(j+2)} - 2l H_{j+1}B_k^{(j+2)} - (qB_k)^{(j)}\bigr],\qquad j\in \mathbb{N}.
\end{equation}
For $\omega\in\mathbb{R}$, $|\omega|\ge 1$ the remainder $\mathcal{R}_{m}$ satisfies the inequality
\begin{equation*}
    |\mathcal{R}_{m}(\omega,x)|\le \frac{c(l,m)}{|\omega|^{l+m+2}}\int_0^x \left| \left( t^{m+1}B_{m+1}(t)\right)'\right|\,dt,
\end{equation*}
and, as can be seen from \eqref{ul asympt}, $\mathcal{R}_{m}(\omega,x)$ is an even entire function of the complex variable $\omega$.

Observe that the condition $q\in C^{2k+j}[0,b]$ is sufficient for the $j$-th derivative of the function $B_{k+1}$ to be well defined and bounded on $[0,b]$ via the formulas \eqref{B0}--\eqref{Bk der}.  Hence it is sufficient to have $q\in C^{2p-2}[0,b]$ to obtain the coefficients $A_0,\ldots,A_{p}$. As for the expression $\bigl(x^{p+1}B_{p+1}(x)\bigr)'$, we show that the condition $q\in C^{2p-1}[0,b]$ is sufficient, one does not need to ask for $q$ to be $2p+1$ times differentiable. Indeed, it follows from \eqref{Bk} that
\[
\bigl(x^{p+1}B_{p+1}\bigr)' = -\frac 12 \Bigl( \bigl(x^pB_p\bigr)'' - 2(p+l)x^{p-1}B_p' - p(p-1) x^{p-2} B_p + 2l x^{p-1}B_p'(0) - x^p qB_p\Bigr),
\]
and the last four terms are well defined whenever $q\in C^{2p-1}[0,b]$, as was explained above. For the first term we consider two cases. If $p=1$ then $(xB_1)''=-\frac{B_0}2 q'(x)$, and for $p>1$ using \eqref{Bk} we have that the expression
\[
\bigl(x^p B_p\bigr)'' = -\frac 12\Bigl( x^{p-1}B_{p-1}'' -2l x^{p-2}B_{p-1}' +2l x^{p-2}B_{p-1}'(0) -x^{p-1}q B_{p-1}\Bigr)'
\]
is well defined whenever $q\in C^{2p-1}[0,b]$.

Applying the Paley-Wiener theorem as in the proof of Proposition \ref{Prop Smooth R} we obtain that
\begin{equation}\label{Rp Int}
\mathcal{R}_p(\omega,x) = \int_{-x}^x \mathcal{\hat R}(x,t) e^{i\omega t}\,dt,
\end{equation}
where the function $\mathcal{\hat R}(x,\cdot)\in W^{l+p+3/2-\varepsilon}(\mathbb{R})\cap \operatorname{Lip}_{l+p+1-\varepsilon}(\mathbb{R})\cap \operatorname{Lip}^*_{l+p+3/2}(\mathbb{R},2)$ and $\operatorname{supp}\mathcal{\hat R}(x,\cdot)\subset[-x,x]$.
From the formula \cite[(5.10.2)]{Lebedev}, $J_\nu(z) = \frac{(z/2)^\nu}{\sqrt{\pi}\Gamma(\nu+1/2)}\int_{-1}^1(1-t^2)^{\nu-1/2}\cos(zt)\,dt$ we have
\begin{equation}\label{Jnu Int}
    \frac{\sqrt x J_{l+k+1/2}(\omega x)}{\omega^{l+k+1/2}} = \frac{x^{l+k}}{\sqrt\pi 2^{l+k+1/2}\Gamma(l+k+1)}\int_{-x}^x \left(1-\frac {z^2}{x^2}\right)^{l+k}e^{i\omega z}\,dz.
\end{equation}
Comparing the expressions \eqref{g omega R}, \eqref{ul asympt}, \eqref{Rp Int} and \eqref{Jnu Int} one can see that
\begin{equation}\label{diff Rs}
\widetilde R(x,t) - \mathcal{\hat R}(x,t) = \sum_{k=1}^p \frac{A_k(x) x^{l+k}}{\sqrt\pi 2^{l+k+1/2}\Gamma(l+k+1)}\cdot \left(1-\frac{t^2}{x^2}\right)^{l+k},\qquad -x\le t\le x.
\end{equation}

Suppose that $l\in\mathbb{N}_0$. Then for every fixed $x$,  the right-hand side of \eqref{diff Rs} is a polynomial in $t$, i.e., a $C^\infty$-function implying that the smoothness of $\widetilde R(x,\cdot)$ coincides with the smoothness of $\mathcal{\hat R}(x,\cdot)$ for $t\in[-x,x]$.
Now one can obtain all remaining statements of the proposition similarly to the proof of Theorem \ref{Th Legendre for R}.

Suppose that $l\not\in\mathbb{N}_0$. Consider the Fourier-Legendre series for $\mathcal{\hat R}$,
\[
\mathcal{\hat R}(x,t)=\frac 12\sum_{n=0}^{\infty}\frac{\hat\beta_{n}(x)}{x}P_{2n}\left(  \frac{t}%
{x}\right).
\]
Similarly to the proof of Theorem \ref{Th Legendre for R} we have that $|\hat\beta_N(x)|\le c x^{l+p+2}(N-1)^{-l-p-1}$ when $2N\ge [l+p+9/2]$. The Fourier-Legendre coefficients for the right-hand side of \eqref{diff Rs} can be estimated using Lemma \ref{Lemma beta k decay special}. Combining the estimates one obtains \eqref{betan estimate NonInt l}. The difference $R-R_N$ can be estimated using \eqref{betan estimate NonInt l} and the orthogonality of the Legendre polynomials. One has
\[
\|R(x,\cdot) - R_N(x,\cdot)\|^2_{L_2[0,x]} = \sum_{n=N+1}^\infty \frac{|\beta_n(x)|^2}{x^2}\cdot \frac{x}{4n+1}\le \frac{c^2_4 x^{2l+3}}{2rN^{2r}}.\qedhere
\]
\end{proof}

\begin{remark}\label{Rmk Saturation}
The estimates  \eqref{L2 estimate R RN NonInt l} and \eqref{betan estimate NonInt l} present a saturation phenomenon, the exponent $r$ can not exceed $2l+3$ whatever smooth the potential $q$ is. It is not a technical restriction, but an intrinsic property of the proposed representation. Indeed, one can deduce similarly to the proof of Lemma \ref{Lemma beta k decay special} that the order $2n+2k+2$ in \eqref{beta k decay special} can not be improved. Hence, whenever one has  $A_1(x)=\frac{A_0}2 Q(x)\ne 0$ in \eqref{diff Rs}, the decay rate order of the coefficients $\beta_n(x)$ for sufficiently smooth potential $q$ is determined by the first term in \eqref{diff Rs} and can not exceed $2l+3$.
\end{remark}

\begin{remark}
Using additional terms from the asymptotic expansion \eqref{ul asympt} together with \eqref{Rp Int} in comparison with representation \eqref{ul} may result in a modification of the method proposed in this paper allowing one to obtain an improved convergence rate. We leave the detailed analysis for a separate paper.
\end{remark}

\section{Representation of the regular solution}

Here we prove the main result of the present work.

\begin{theorem}\label{Th ul Neumann}
Under the conditions of Theorem \ref{Th Legendre for R}, the regular solution $u_{l}(\omega,x)$ of \eqref{Bessel type} satisfying the
asymptotic relation $u_{l}(\omega,x)\sim x^{l+1}$ when $x\rightarrow0$ has the
form%
\begin{equation}
u_{l}(\omega,x)=d(\omega)b_{l}(\omega x)+\sum_{n=0}^{\infty}\left(  -1\right)
^{n}\beta_{n}(x)j_{2n}(\omega x) \label{ul via betas}%
\end{equation}
where $\beta_{n}$ are defined by \eqref{beta n} and $j_{2n}$ stands for the
spherical Bessel function of the first kind of order $2n$, the
series converges uniformly with respect to $x$ on $[0,b]$ and converges
uniformly with respect to $\omega$ on any finite subset of the complex plane
of the variable $\omega$.

For the approximate solution
\begin{equation}
u_{l;N}(\omega,x)=d(\omega)b_{l}(\omega x)+\sum_{n=0}^{N}\left(  -1\right)
^{n}\beta_{n}(x)j_{2n}(\omega x) \label{ulN}%
\end{equation}
the following estimate holds
\begin{equation}
\label{est uln1}\left\vert u_l(\omega,x)-u_{l;N}(\omega,x)\right\vert \leq
\sqrt x\varepsilon_{N}(x)
\end{equation}
for any $\omega\in\mathbb{R}$, $\omega\ne0$, and
\begin{equation}
\label{est uln2}\left\vert u_l(\omega,x)-u_{l;N}(\omega,x)\right\vert \leq
\left(\frac{\sinh(2Cx)}{2C}\right)^{1/2}\varepsilon_{N}(x)
\end{equation}
for any $\omega\in\mathbb{C}$, $\omega\neq0$ belonging to the strip
$\left\vert \operatorname{Im}\omega\right\vert \leq C$, $C\geq0$, where
$\varepsilon_N$ is a sufficiently small nonnegative function such that
$\| R(x,\cdot)-R_{N}(x,\cdot)\|_{L_2[0,x]} \leq\varepsilon_N(x)$ which exists due
to Theorem \ref{Th Legendre for R} (an estimate for
$\varepsilon_N(x)$ is presented in \eqref{L2 estimate R RN}).

Moreover, for each fixed $x$ and $\omega$ the convergence rate of $u_{l;N}(\omega,x)$ to $u_l(\omega,x)$ is exponential. To be more precise, let $x>0$ be fixed and $\omega\in\mathbb{C}$ satisfy $|\omega|\le \omega_0$. Then for all $N>\omega_0 x/2$ one has
\begin{equation}\label{est uln exp}
    |u_l(\omega,x)-u_{l;N}(\omega,x)|\le \frac{cx e^{|\operatorname{Im}\omega|x}}{N^{l+1-\alpha}}\cdot\frac{1}{(2N+2)!}\cdot\left|\frac{\omega_0 x}{2}\right|^{2N+2},
\end{equation}
where $c$ is a constant depending on $q$ and $l$ only and $\alpha$ is the constant from the condition \eqref{Condition precise on q}.
\end{theorem}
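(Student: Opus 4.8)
The plan is to deduce the series \eqref{ul via betas} from the integral representation \eqref{ul} together with the Fourier--Legendre expansion \eqref{R(x,t)}, and then to derive every estimate from the single identity
\[
u_l(\omega,x)-u_{l;N}(\omega,x)=\int_0^x\bigl(R(x,t)-R_N(x,t)\bigr)\cos\omega t\,dt .
\]
First I would record the classical formula $\int_0^x P_{2n}(t/x)\cos\omega t\,dt=(-1)^n x\,j_{2n}(\omega x)$, which follows from the plane-wave expansion $\int_{-1}^{1}P_m(s)e^{izs}\,ds=2i^m j_m(z)$ (see \cite{Watson}) by taking $m=2n$ and the real part, after the substitution $t=xs$. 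Since by Theorem \ref{Th Legendre for R} the series \eqref{R(x,t)} converges to $R(x,\cdot)$ in $L_2[0,x]$ and the functional $f\mapsto\int_0^x f(t)\cos\omega t\,dt$ is continuous on $L_2[0,x]$, term-by-term integration in \eqref{ul} is legitimate and produces $\sum_{n\ge 0}(-1)^n\beta_n(x)j_{2n}(\omega x)$; this is \eqref{ul via betas}, and the same computation applied to $R_N$ gives the displayed identity above, hence also $u_l(\omega,x)-u_{l;N}(\omega,x)=\sum_{n>N}(-1)^n\beta_n(x)j_{2n}(\omega x)$.

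For the bounds \eqref{est uln1} and \eqref{est uln2} I would apply the Cauchy--Schwarz inequality to the identity above, $|u_l-u_{l;N}|\le\|R(x,\cdot)-R_N(x,\cdot)\|_{L_2[0,x]}\cdot\|\cos(\omega\cdot)\|_{L_2[0,x]}\le\varepsilon_N(x)\cdot\|\cos(\omega\cdot)\|_{L_2[0,x]}$, and estimate the trigonometric factor. For $\omega\in\mathbb{R}$ one has $\int_0^x\cos^2\omega t\,dt=\frac x2+\frac{\sin 2\omega x}{4\omega}\le x$ (using $|\sin u|\le|u|$), which gives \eqref{est uln1}. For $|\operatorname{Im}\omega|\le C$ one uses $|\cos\omega t|^2=\cos^2(\operatorname{Re}\omega\,t)+\sinh^2(\operatorname{Im}\omega\,t)\le\cosh^2(Ct)$ together with $\int_0^x\cosh^2(Ct)\,dt=\frac x2+\frac{\sinh 2Cx}{4C}\le\frac{\sinh 2Cx}{2C}$ (because $u\le\sinh u$), which gives \eqref{est uln2}. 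The two uniform-convergence claims then follow immediately: by \eqref{L2 estimate R RN}, $\varepsilon_N(x)\le C_1 b^{l+3/2-\alpha}N^{-(l+3/2-\alpha)}\to 0$ uniformly on $[0,b]$ since $l+3/2-\alpha>0$, while $\|\cos(\omega\cdot)\|_{L_2[0,x]}$ is bounded uniformly for $x\in[0,b]$ and for $\omega$ in any fixed compact subset of $\mathbb{C}$ by the computation just made.

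For the exponential rate \eqref{est uln exp} I would work directly with the tail $\sum_{n>N}(-1)^n\beta_n(x)j_{2n}(\omega x)$ and combine the polynomial decay of $\beta_n$ with the factorial decay of the spherical Bessel functions. From \eqref{betan est}, $|\beta_n(x)|\le C_2 x^{l+2-\alpha}(n-1)^{-(l+1-\alpha)}$ once $2n\ge[l+9/2]$; note that the hypotheses of Theorem \ref{Th Legendre for R} force $l>\alpha-1$, so the exponent $l+1-\alpha$ is positive. For $j_{2n}$ I would use the Poisson integral representation \cite[(5.10.2)]{Lebedev} with $\nu=2n+\frac12$, which yields $|j_{2n}(\omega x)|\le\frac{\sqrt\pi}{2}\cdot\frac{|\omega x/2|^{2n}}{\Gamma(2n+3/2)}\,e^{|\operatorname{Im}\omega|x}\le\frac{\sqrt\pi}{2}\cdot\frac{|\omega x/2|^{2n}}{(2n)!}\,e^{|\operatorname{Im}\omega|x}$, the last step using $\Gamma(2n+3/2)\ge\Gamma(2n+1)=(2n)!$; it is precisely here, rather than from replacing $\cos\omega t$ by a Taylor polynomial, that the division by $2$ inside $|\omega_0 x/2|^{2N+2}$ in \eqref{est uln exp} originates. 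For $|\omega|\le\omega_0$ and $N>\omega_0 x/2$ the ratio of consecutive terms of $\sum_{n>N}(n-1)^{-(l+1-\alpha)}\frac{|\omega_0 x/2|^{2n}}{(2n)!}$ is at most $\frac14$, because for $n\ge N+1$ one has $(2n+1)(2n+2)>(2N+2)^2>(\omega_0 x)^2$ and $(n-1)^{l+1-\alpha}\le n^{l+1-\alpha}$; hence the whole tail is at most $\frac43$ times its first term $N^{-(l+1-\alpha)}\frac{|\omega_0 x/2|^{2N+2}}{(2N+2)!}$. Combining the three estimates and absorbing $x^{l+1-\alpha}\le b^{l+1-\alpha}$ into the constant yields \eqref{est uln exp}.

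The main obstacle is the last paragraph: one must obtain the geometric tail estimate with a constant depending only on $q$ and $l$ (not on $x$, $N$ or $\omega$), which is exactly what makes the restriction $N>\omega_0 x/2$ necessary, and one must dispose of the finitely many indices $n$ below the threshold $2n\ge[l+9/2]$ of \eqref{betan est} --- for these $\beta_n(x)$ is merely bounded (continuously in $x$), and the resulting finite correction is absorbed into the constant $c$. Everything preceding that step is essentially bookkeeping around the identity $u_l-u_{l;N}=\int_0^x(R-R_N)\cos\omega t\,dt$ and the $L_2$ estimate of Theorem \ref{Th Legendre for R}.
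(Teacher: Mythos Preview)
Your proof is correct and follows essentially the same route as the paper: the Legendre--cosine integral yielding $(-1)^n x\,j_{2n}(\omega x)$, the Cauchy--Schwarz bound on $\int_0^x(R-R_N)\cos\omega t\,dt$, and then the tail estimate via $|\beta_n|$ and the factorial decay of $j_{2n}$. Your treatment of \eqref{est uln exp} is in fact more explicit than the paper's, which simply refers to \cite[Proposition~4.4]{KNT 2015}; one small caveat is that the hypothesis $\alpha<l+3/2$ does not quite force $l+1-\alpha>0$ (take $l=-1/2$, $\alpha$ close to $1$), but your ratio-test argument survives since $((n-1)/n)^{l+1-\alpha}$ is then bounded by $2^{1/2}$ and the geometric decay persists with a harmless change of constant.
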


\begin{proof}
Consider the solution (\ref{ul}). For the integral from (\ref{ul}) using
formula 2.17.7 from \cite[p. 433]{Prudnikov} we obtain
\begin{equation*}
\int_{0}^{x}R(x,t)\cos\omega t\,dt    =\sum_{n=0}^{\infty}\frac{\beta_{n}%
(x)}{x}\int_{0}^{x}P_{2n}\left(  \frac{t}{x}\right)  \cos\omega t\,dt
  =\sum_{n=0}^{\infty}\left(  -1\right)  ^{n}\beta_{n}(x)j_{2n}(\omega x).
\end{equation*}
Using the Cauchy-Schwarz inequality we obtain  that
\begin{equation*}
\begin{split}
|u_l(\omega;x) - u_{l;N}(\omega;x)| &= \left|\int_0^x \bigl(R(x,t)-R_N(x,t)\bigr)\cos \omega t\,dt\right|\\
&\le \|R(x,\cdot) - R_N(x,\cdot)\|_{L_2[0,x]} \cdot \left(\int_0^x |\cos^2 \omega t|\,dt\right)^{1/2}.
\end{split}
\end{equation*}
The estimates \eqref{est uln1} and \eqref{est uln2} follow immediately from this inequality by taking into account that for complex $\omega$ one has
\[
\int_0^x |\cos^2\omega t|\,dt = \frac 12\int_0^x \left(\cosh (2\operatorname{Im} \omega t)+1\right)\,dt =\frac {\sinh (2|\operatorname{Im} \omega |x)}{4|\operatorname{Im} \omega |}+\frac x2\le \frac {\sinh (2|\operatorname{Im} \omega |x)}{2|\operatorname{Im} \omega |}
\]
and that the function $\sinh t/t$ is monotone increasing for $t>0$.

The uniform convergence with respect to $x$ follows from the estimate \eqref{L2 estimate R RN} for $\varepsilon_N(x)$.

The estimate \eqref{est uln exp} can be obtained using the estimate \eqref{betan est} similarly to the proof of \cite[Proposition 4.4]{KNT 2015}.
\end{proof}

\begin{remark}\label{Rmk Accuracy decay}
Note that the function $u_l(\omega,x)$, as a function of $\omega\in\mathbb{R}$, is bounded by $C\bigl(\frac{x}{b+|\omega|x}\bigr)^{l+1}$, see \cite{KosSakhTesh2010}, i.e., is decaying fast when $\omega\to\infty$, especially for large values of the parameter $l$. Hence, though the uniform estimate \eqref{est uln1} holds for all $\omega\in\mathbb{R}$, for a large $l$ it is practical only for $\omega$ close to zero.
For small values of the parameter $l$ one obtains a quite large region (sufficient to compute hundreds of eigenvalues, for example), while for large values of $l$ the usable region of $\omega$'s is much smaller (suitable to compute a dozen of eigenvalues at most).
\end{remark}

\section{Recurrent equations for $\beta_{n}$\label{Sect Recurrent beta}}

Let us substitute the solution (\ref{ul via betas}) into equation
(\ref{Bessel type}). Definition \eqref{beta n} and estimate \eqref{betan est} together with the inequality \cite[(9.1.62)]{Abramowitz},
$|j_n(x)|\le \sqrt{\pi}\left|\frac x2\right|^n \frac{1}{\Gamma(n+3/2)}$, $x\in\mathbb{R}$ justify the possibility to differentiate the series \eqref{ul via betas} termwise.
Consider
\[
L\left[  \beta_{n}(x)j_{2n}(\omega x)\right]  =\beta_{n}(x)L\left[
j_{2n}(\omega x)\right]  -\beta_{n}^{\prime\prime}(x)j_{2n}(\omega
x)-2\beta_{n}^{\prime}(x)j_{2n}^{\prime}(\omega x)
\]
where the prime means the derivative with respect to $x$. The following
formulas are used
\[
\frac{dj_{k}(z)}{dz}=-j_{k+1}(z)+\frac{k}{z}j_{k}(z)\quad\text{and}\quad
\frac{dj_{k}(z)}{dz}=j_{k-1}(z)-\frac{k+1}{z}j_{k}(z).
\]
Thus,%
\[
j_{2n}^{\prime}(\omega x)=-\omega\left(  j_{2n+1}(\omega x)-\frac{2n}{\omega
x}j_{2n}(\omega x)\right)
\]
and%
\[
j_{2n}^{\prime\prime}(\omega x)=-j_{2n}(\omega x)\left(  \omega^{2}%
-\frac{2n\left(  2n-1\right)  }{x^{2}}\right)  +\frac{2\omega}{x}%
j_{2n+1}(\omega x).
\]
Hence
\[
L\left[  j_{2n}(\omega x)\right]  =j_{2n}(\omega x)\left(  \omega^{2}%
-\frac{2n\left(  2n-1\right)  }{x^{2}}\right)  -\frac{2\omega}{x}%
j_{2n+1}(\omega x)+q_{l}(x)j_{2n}(\omega x)
\]
where $q_{l}(x):=\left(  \frac{l(l+1)}{x^{2}}+q(x)\right)  $ and thus,
\begin{multline*}
0=L\left[  u_{l}(\omega,x)\right]  -\omega^{2}u_{l}(\omega,x)=d(\omega
)q(x)b_{l}(\omega x)\\
+\sum_{n=0}^{\infty}\left(  -1\right)  ^{n}\left[  \beta_{n}(x)\left(
j_{2n}(\omega x)\left(  q_{l}(x)-\frac{2n\left(  2n-1\right)  }{x^{2}}\right)
-\frac{2\omega}{x}j_{2n+1}(\omega x)\right)  \right. \\
\left.  -\beta_{n}^{\prime\prime}(x)j_{2n}(\omega x)+2\beta_{n}^{\prime
}(x)\left(  \omega j_{2n+1}(\omega x)-\frac{2n}{x}j_{2n}(\omega x)\right)
\right]  .
\end{multline*}
We obtain the equality,
which after applying the formula
\begin{equation}
j_{2n}(\omega x)=\frac{\omega x}{4n+1}\bigl(j_{2n-1}(\omega x)+j_{2n+1}(\omega
x)\bigr) \label{recusrive Bessel}%
\end{equation}
can be written in the form
\begin{equation}
\frac{d(\omega)q(x)b_{l}(\omega x)}{\omega x}-q_{l}(x)j_{-1}(\omega
x)=\sum_{n=1}^{\infty}\alpha_{n}(x)j_{2n-1}(\omega x) \label{equality 1}
\end{equation}
with
\begin{equation}\label{alpha n def}
\begin{split}
\alpha_{n}(x)    :=&\left(  -1\right)  ^{n}\left(  \frac{1}{4n+1}\left(
\beta_{n}^{\prime\prime}(x)+\frac{4n}{x}\beta_{n}^{\prime}(x)+\left(
\frac{2n\left(  2n-1\right)  }{x^{2}}-q_{l}(x)\right)  \beta_{n}(x)\right)
\right. \\
&  -\frac{1}{4n-3}\left(  \beta_{n-1}^{\prime\prime}(x)+\frac{4\left(
n-1\right)  }{x}\beta_{n-1}^{\prime}(x)+\left(  \frac{2\left(  n-1\right)
\left(  2\left(  n-1\right)  -1\right)  }{x^{2}}-q_{l}(x)\right)  \beta
_{n-1}(x)\right) \\
&  \left.  +2\left(  \frac{\beta_{n-1}^{\prime}(x)}{x}-\frac{\beta_{n-1}%
(x)}{x^{2}}\right)  \right)  .
\end{split}
\end{equation}
Multiplying equality (\ref{equality 1}) by $j_{2m-1}(\omega x)$,
$m=1,2,\ldots$, integrating with respect to $\omega$ from $0$ to $\infty$ and
using the integrals
\begin{equation*}
\int_{0}^{\infty}j_{2n-1}(\omega x)j_{2m-1}(\omega x)\,d\omega=
\begin{cases}
0, & m\neq n\\
\frac{\pi}{2x\left(  4m-1\right)  },& m=n
\end{cases}
\end{equation*}
for $n,m\in\mathbb{Z}$ with $m+n-1>-1/2$ (c.f., \cite[Formula 11.4.6]{Abramowitz}) we obtain
\[
\frac{\pi\alpha_{n}(x)}{2\left(  4n-1\right)  }=q(x)\int_{0}^{\infty}%
\frac{d(\omega)b_{l}(\omega x)}{\omega}j_{2n-1}(\omega x)d\omega.
\]
Consider the integral
\begin{align*}
\int_{0}^{\infty}\frac{b_{l}(\omega x)}{\omega^{l+2}}j_{2n-1}(\omega x)d\omega
&  =\sqrt{\frac{\pi}{2}}\int_{0}^{\infty}\frac{J_{l+1/2}(\omega x)J_{2n-1/2}%
(\omega x)}{\omega^{l+2}}d\omega\\
&  =\sqrt{\frac{\pi}{2}}\frac{\left(  \frac{x}{2}\right)  ^{l+1}%
\Gamma(l+2)\Gamma(n-1/2)}{2\Gamma(l-n+2)\Gamma(n+1)\Gamma(n+l+3/2)}%
\end{align*}
where formula (1) from \cite[Sect. 13.41]{Watson} was used. Hence%
\[
\int_{0}^{\infty}\frac{d(\omega)b_{l}(\omega x)}{\omega}j_{2n-1}(\omega
x)d\omega=\frac{x^{l+1}\sqrt{\pi}\Gamma(l+2)\Gamma(l+3/2)\Gamma(n-1/2)}%
{4\Gamma(l-n+2)\Gamma(n+1)\Gamma(n+l+3/2)}.
\]
Thus,
\[
\alpha_{n}(x)=B_n q(x)x^{l+1}, \qquad \text{with}\ B_n:=\frac{\left(  4n-1\right)  \Gamma(l+2)\Gamma(l+3/2)\Gamma(n-1/2)}{2\sqrt{\pi}\Gamma(l-n+2)\Gamma
(n+1)\Gamma(n+l+3/2)}.
\]
It is easy to see (c.f., \cite{KNT 2015} and \eqref{alpha n def}) that this equation can be written in the following form
\begin{equation}\label{recurrent eq for beta n}
\frac{1}{\left(  4n+1\right)  x^{2n}}L\left[  x^{2n}\beta_{n}\right]
=\frac{x^{2n-1}}{4n-3}L\left[  \frac{\beta_{n-1}}{x^{2n-1}}\right]
-(-1)^nB_n q(x) x^{l+1}.
\end{equation}
Thus, we obtained a sequence of equations satisfied by the coefficients $\beta_n$.


A solution $u$ of an equation
\begin{equation}\label{eq Lu h}
    Lu=h
\end{equation}
can be easily obtained using the P\'{o}lya factorization of $L$, $Lu = -\frac 1{u_0} \partial u_0^2 \partial \frac u{u_0}$, where $\partial$ denotes the derivative with respect to $x$ and $u_0$ is the same solution as in Section \ref{Sect Transmut Bessel}. The function
\begin{equation}\label{u Polya}
    u(x) = -u_0(x) \int_0^x \frac 1{u_0^2(t)} \int_0^t u_0(s) h(s)\, ds\,dt
\end{equation}
is a solution of \eqref{eq Lu h} provided, e.g., that $|h(x)|\le C x^{l-1+\varepsilon}$ in a neighborhood of zero for some positive $C$ and $\varepsilon$, see \cite{CKT2013} for further details. Note also that the expression \eqref{u Polya} gives the unique solution of \eqref{eq Lu h} satisfying $u(x) = o(x^{l+1})$, $x\to 0$.

One can see from \eqref{Xtilde} that the functions $\varphi_n$ satisfy
$|\varphi_n(x)|\le c_{n,1}x^{2n+l+1}$, $|\varphi_n'(x)|\le c_{n,2}x^{2n+l}$ and $|\varphi_n''(x)|\le c_{n,3}x^{2n+l-1}$, $n\ge 0$, for some constants $c_{n,i}$. Hence it follows from \eqref{beta n} (c.f., \eqref{betan est}) that $|\beta_n(x)|\le c_{n,4}x^{l+1}$, $|\beta_n'(x)|\le c_{n,5}x^{l}$ and $|\beta_n''(x)|\le c_{n,6}x^{l-1}$, $x> 0$. These estimates justify that the expression \eqref{u Polya} can be used to construct the functions $\beta_n$, $n\ge 1$ from equations \eqref{recurrent eq for beta n}.
One starts with
\begin{equation}
\beta_{0}(x)=u_{0}(x)-x^{l+1} \label{beta zero}
\end{equation}
and define for $n\ge 1$
\begin{equation}\label{beta n recursive}
    \beta_n(x) = -\frac{(4n+1)u_0(x)}{x^{2n}}\int_0^x \frac 1{u_0^2(t)} \int_0^t u_0(s) \left[ \frac{s^{4n-1}}{4n-3}L\left(\frac{\beta_{n-1}}{s^{2n-1}}\right)-(-1)^n B_n q(s)s^{2n+l+1}\right]\, ds\,dt.
\end{equation}
Note that for an integer $l$ the second term under the integral sign is equal to zero for $n\ge l+2$. To eliminate the first and second derivatives of $\beta_{n-1}$ resulting from the term $L\left(\frac{\beta_{n-1}}{s^{2n-1}}\right)$, one may apply the integration by parts and obtain (similarly to \cite{KNT 2015}) the following recurrent formulas.
\begin{align}
    \eta_n(x) &= \int_0^x \bigl(tu_0'(t)+(2n-1)u_0(t)\bigr)t^{2n-2}\beta_{n-1}(t)\,dt, &
    \kappa_n(x) &= \int_0^x u_0(t) q(t) t^{2n+l+1}\,dt,\label{etan kappan}\\
    \theta_n(x) &= \int_0^x \frac{1}{u_0^2(t)}\bigl( \eta_n(t) - t^{2n-1} \beta_{n-1}(t) u_0(t)\bigr)\,dt, &
    \mu_n(x) &= \int_0^x \frac{\kappa_n(t)}{u_0^2(t)}\,dt,\qquad n\ge 1,\label{thetan mun}
\end{align}
and finally
\begin{equation}\label{beta_n alt}
    \beta_n(x) = \frac{4n+1}{4n-3}\left[\beta_{n-1}(x) + \frac{u_0(x)}{x^{2n}}\bigl[2(4n-1)\theta_n(x) + (-1)^n (4n-3)B_n \mu_n(x)\bigr]\right].
\end{equation}

\section{Representation of the derivative of the regular solution}

In order to obtain a series
expansion for $u_{l}^{\prime}(\omega,x)$ uniformly convergent with respect to $\omega$ (here and below prime means the
derivative with respect to $x$) we return to (\ref{ul}) from which
\begin{equation}
u_{l}^{\prime}(\omega,x)=\omega d(\omega)b_{l}^{\prime}(\omega x)+\int_{0}^{x}%
R_{1}(x,t)\cos\omega t\,dt \label{ul prime}%
\end{equation}
with the subindex \textquotedblleft$1$\textquotedblright\ denoting the partial
derivative with respect to the first argument. Here we took into account that
by construction $R(x,x)\equiv0$ (see (\ref{R=Const R0}) and (\ref{R(0)}), also Proposition \ref{Prop Smooth R}).

We have
\begin{align}
R_{1}(x,s)  &  =A\frac{\partial}{\partial x}\int_{s}^{x}V(x,t)\left(
t-\frac{s^{2}}{t}\right)  ^{l}\,dt\nonumber\\
&  =A\left(  \frac{Q(x)}{2}x^{-l}\left(  x^{2}-s^{2}\right)  ^{l}+\int_{s}%
^{x}V_{1}(x,t)\left(  t-\frac{s^{2}}{t}\right)  ^{l}\,dt\right)  \label{R1}%
\end{align}
where
\[
A:=\frac{2\Gamma\left(  l+3/2\right)  }{\sqrt{\pi}\Gamma\left(  l+1\right)
}.
\]
To obtain (\ref{R1}) we used (\ref{V(x,x)}).

Denote
\begin{equation}
R^{(1)}(x,s):=\frac{AQ(x)}{2}x^{-l}\left(  x^{2}-s^{2}\right)  ^{l}%
\quad\text{and\quad}R^{(2)}(x,s):=A\int_{s}^{x}V_{1}(x,t)\left(  t-\frac
{s^{2}}{t}\right)  ^{l}\,dt. \label{R1 and R2}%
\end{equation}
Notice that for $-1/2\leq l<0$ the function $R^{(1)}$ has a singularity when
$x=s$. In order to calculate $u_{l}^{\prime}(\omega,x)$ from (\ref{ul prime})
we split the integral into two parts%
\[
\int_{0}^{x}R_{1}(x,t)\cos\omega t\,dt=\int_{0}^{x}R^{(1)}(x,t)\cos\omega
t\,dt+\int_{0}^{x}R^{(2)}(x,t)\cos\omega t\,dt.
\]
Consider
\[
\int_{0}^{x}R^{(1)}(x,t)\cos\omega t\,dt=\frac{AQ(x)}{2}x^{-l}\int_{0}%
^{x}\left(  x^{2}-s^{2}\right)  ^{l}\cos\omega t\,dt.
\]
Due to Remark \ref{Rem Yl[cos]} we obtain that
\begin{equation*}
\int_{0}^{x}R^{(1)}(x,t)\cos\omega t\,dt  =\frac{d\left(  \omega\right)  }{2}Q(x)b_{l}\left(  \omega x\right)  .
\end{equation*}
Thus,
\begin{equation}
u_{l}^{\prime}(\omega,x)=d(\omega)\left(  \omega b_{l}^{\prime}(\omega x)+\frac
{Q(x)}{2}b_{l}\left(  \omega x\right)  \right)  +\int_{0}^{x}R^{(2)}%
(x,t)\cos\omega t\,dt. \label{ul prime R2}%
\end{equation}

The article \cite{Volk} does not provide sufficient details on the behavior of the derivative $V_1(x,t)$ near $t=0$. As a result, the definition \eqref{R1 and R2} is not quite helpful for studying the integral kernel $R^{(2)}$, even the integrability of $R^{(2)}(x,t)$ near $t=0$ necessary for the representation \eqref{ul prime R2} to be well defined goes under the question. Below we provide a different proof of the representation \eqref{ul prime R2} based on the Paley-Wiener theorem and similar to that of Proposition \ref{Prop Smooth R}.
\begin{theorem}
\label{Th R2 Legendre}Assume additionally to the conditions of Theorem \ref{Th u(omega,x)} that $q\in C^1[0,b]$. Let $x>0$ be fixed.
Then there exists an even, compactly supported on $[-x,x]$ function $\widetilde R^{(2)}(x,t)$ such that $\widetilde R^{(2)}(x,\cdot)\in W_2^{l+3/2-\varepsilon}(\mathbb{R})\cap \operatorname{Lip}_{l+1-\varepsilon}(\mathbb{R})\cap \operatorname{Lip}^*_{l+3/2}(\mathbb{R})$ for any small $\varepsilon>0$ and the representation \eqref{ul prime R2} holds with the function $R^{(2)}$ satisfying $R^{(2)}(x,t) = 2\widetilde R^{(2)}(x,t)$, $0\le t\le x$.

The kernel $R^{(2)}$ from (\ref{R1 and R2}) admits the
following representation%
\begin{equation}
R^{(2)}(x,t)=\sum_{n=0}^{\infty}\frac{\gamma_{n}(x)}{x}P_{2n}\left(  \frac
{t}{x}\right)  \label{R2 Legendre}%
\end{equation}
where
\begin{equation}
\gamma_{n}(x)=\left(  4n+1\right)  \sum_{k=0}^{n}\frac{l_{2k,2n}}{x^{2k}%
}\left(  \varphi_{k}^{\prime}(x)-c_{k,l}\left(  \left(  2k+l+1\right)
x^{2k+l}+\frac{Q(x)}{2}x^{2k+l+1}\right)  \right)  . \label{gamma n}%
\end{equation}
The convergence of the series in \eqref{R2 Legendre} is the same as stated in Theorem \ref{Th Legendre for R} for the series \eqref{R(x,t)}.

Denote
\[
R_{N}^{(2)}(x,t):=\sum_{n=0}^{N}\frac{\gamma_{n}(x)}%
{x}P_{2n}\left(  \frac{t}{x}\right).
\]
Then there exist constants $C_1$ and $C_2$, dependent on $q$ and $l$ and independent of $x$ and $N$, such that for any $x>0$ the inequalities hold
\begin{equation}\label{L2 estimate R2 R2N}
    \|R^{(2)}(x,\cdot) - R^{(2)}_N(x,\cdot)\|_{L_2[0,x]} \le \frac{C_1 x^{l+3/2}}{N^{l+3/2}},\qquad 2N\ge [l+5/2]
\end{equation}
and
\begin{equation}\label{gamman est}
    |\gamma_N(x)|\le \frac{C_2 x^{l+2}}{(N-1)^{l+1}},\qquad 2N\ge [l+9/2].
\end{equation}

Let $q\in C^{2p+1}[0,b]$ with $p\ge 1$. If $l\in \mathbb{N}_0$, then the smoothness of $\widetilde R^{(2)}$ and the order of $N$ in the estimates \eqref{L2 estimate R2 R2N} and \eqref{gamman est} can be increased by $p$; if $l\not\in\mathbb{N}_0$, then the order of $N$ in the estimates \eqref{L2 estimate R2 R2N} and \eqref{gamman est} can be increased to $r = \min\{l+p+1, 2l+3\}$,  c.f.,  Proposition \ref{Prop Int l}
\end{theorem}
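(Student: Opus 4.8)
The plan is to carry over, with one new ingredient, the Paley--Wiener/Fourier--Legendre machinery of Propositions \ref{Prop Smooth R}, \ref{Prop Int l} and Theorem \ref{Th Legendre for R}. Fix $x>0$ and set $g^{(1)}(\omega):=u_l'(\omega,x)-d(\omega)\bigl(\omega b_l'(\omega x)+\frac{Q(x)}{2}b_l(\omega x)\bigr)$. Since $u_l(\omega,x)$ is an even entire function of $\omega$ of exponential type $\le x$, and the two explicit terms are even entire functions of $\omega$ of exponential type $\le x$ (they are power series in $\omega^2$), the function $g^{(1)}$ is even, entire, of exponential type $\le x$. To estimate $g^{(1)}$ on the real line I would take the Fitouhi--Hamza expansion \eqref{ul asympt} with $m=1$,
\[
u_l(\omega,x)=d(\omega)b_l(\omega x)+A_1(x)\frac{\sqrt x J_{l+3/2}(\omega x)}{\omega^{l+3/2}}+\mathcal R_1(\omega,x),\qquad A_1(x)=\tfrac{B_0Q(x)}{2},
\]
differentiate it in $x$, and simplify using the identity $\frac{d}{dx}\bigl[\frac{\sqrt x J_{l+k+1/2}(\omega x)}{\omega^{l+k+1/2}}\bigr]=\frac{\sqrt x J_{l+k-1/2}(\omega x)}{\omega^{l+k-1/2}}-\frac{l+k}{x}\cdot\frac{\sqrt x J_{l+k+1/2}(\omega x)}{\omega^{l+k+1/2}}$ (a consequence of $\frac{d}{dz}[z^{-\nu}J_\nu(z)]=-z^{-\nu}J_{\nu+1}(z)$ and the three-term recurrence for $J_\nu$). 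The $k=0$ term differentiates to $\omega d(\omega)b_l'(\omega x)$, while the $k=1$ term produces $\frac{Q(x)}{2}d(\omega)b_l(\omega x)$ together with a multiple of $\frac{\sqrt x J_{l+3/2}(\omega x)}{\omega^{l+3/2}}$, so that
\[
g^{(1)}(\omega)=D_1(x)\,\frac{\sqrt x J_{l+3/2}(\omega x)}{\omega^{l+3/2}}+\frac{\partial}{\partial x}\mathcal R_1(\omega,x),\qquad D_1(x)=\frac{B_0}{2}\Bigl(q(x)-\frac{(l+1)Q(x)}{x}\Bigr).
\]
This is precisely where $q\in C^1[0,b]$ enters: it is what makes $D_1$ and the Fitouhi--Hamza remainder bound $|\mathcal R_1(\omega,x)|\lesssim|\omega|^{-(l+3)}$ (which involves $(x^2B_2)'$, cf.\ the proof of Proposition \ref{Prop Int l}) meaningful.

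The main obstacle is to bound $\frac{\partial}{\partial x}\mathcal R_1(\omega,x)$ on $\mathbb R$. I expect $|\frac{\partial}{\partial x}\mathcal R_1(\omega,x)|\lesssim|\omega|^{-(l+2)}$ for $|\omega|\ge1$, i.e.\ one power of $\omega$ worse than the bound on $\mathcal R_1$ itself; this follows from the variation-of-parameters integral representation of $\mathcal R_1$ in \cite{FitouhiHamza}, whose Green kernel vanishes on the diagonal $t=x$ so that $\partial_x$ produces no boundary term and only the expected one-power loss (alternatively, apply Paley--Wiener to $\mathcal R_1$ as in Proposition \ref{Prop Int l} to get $\mathcal R_1(\omega,x)=\int_{-x}^x\widehat{\mathcal R}_1(x,t)e^{i\omega t}\,dt$ with $\widehat{\mathcal R}_1(x,\cdot)\in W_2^{l+5/2-\varepsilon}(\mathbb R)\cap\operatorname{Lip}_{l+2-\varepsilon}(\mathbb R)$ compactly supported in $[-x,x]$, observe $\widehat{\mathcal R}_1(x,\pm x)=0$ by continuity, differentiate under the integral sign, and verify that the $t$-regularity drops by at most one). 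Granting this, $g^{(1)}\in L^2(\mathbb R)$ with $|\omega|^{l+3/2-\varepsilon}g^{(1)}\in L^2(\mathbb R)$ and $\bigl(\int_{|\omega|>X}\bigr)|g^{(1)}(\omega)|^2\,d\omega\lesssim X^{-(2l+3)}$, the slowest-decaying piece being the explicit term $\sim|\omega|^{-(l+2)}$, whose Fourier transform is a constant times $(1-t^2/x^2)^{l+1}\mathbf 1_{[-x,x]}$. The argument of Proposition \ref{Prop Smooth R} then goes through verbatim: Paley--Wiener gives an even $\widetilde R^{(2)}(x,\cdot)$ supported in $[-x,x]$ with $g^{(1)}(\omega)=2\int_0^x\widetilde R^{(2)}(x,t)\cos\omega t\,dt$; combining with \eqref{ul prime} and the identity $\int_0^x R^{(1)}(x,t)\cos\omega t\,dt=\frac{d(\omega)}{2}Q(x)b_l(\omega x)$ established before the theorem, we obtain \eqref{ul prime R2} with $R^{(2)}=2\widetilde R^{(2)}$ on $[0,x]$, and $\widetilde R^{(2)}(x,\cdot)\in W_2^{l+3/2-\varepsilon}\cap\operatorname{Lip}_{l+1-\varepsilon}\cap\operatorname{Lip}^*_{l+3/2}(\mathbb R,2)$ follows from the decay bounds and the embedding $W_2^{\beta+1/2}\subset\operatorname{Lip}_\beta$, exactly as in Propositions \ref{Prop Smooth R} and \ref{Prop Int l}.

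For the coefficient formula \eqref{gamma n}, expand $g^{(1)}(\omega)=\int_0^x R^{(2)}(x,t)\cos\omega t\,dt=\sum_k\frac{(-1)^k\omega^{2k}}{(2k)!}\int_0^x R^{(2)}(x,t)t^{2k}\,dt$, and on the other side differentiate the SPPS-type series $u_l(\omega,x)=\sum_k\frac{(-1)^k\omega^{2k}}{(2k)!}\varphi_k(x)$ termwise in $x$ (legitimate by the bounds $|\varphi_k'(x)|\le c_kx^{2k+l}$) and use the power series of $d(\omega)b_l(\omega x)$ from Section \ref{Sect Transmut Bessel}; matching coefficients of $\omega^{2k}$ gives $\int_0^x R^{(2)}(x,t)t^{2k}\,dt=\varphi_k'(x)-c_{k,l}\bigl((2k+l+1)x^{2k+l}+\frac{Q(x)}{2}x^{2k+l+1}\bigr)$, whence $\gamma_n(x)=(4n+1)\int_0^x R^{(2)}(x,t)P_{2n}(t/x)\,dt=(4n+1)\sum_{k=0}^n\frac{l_{2k,2n}}{x^{2k}}\int_0^x R^{(2)}(x,t)t^{2k}\,dt$ yields \eqref{gamma n}. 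The convergence of \eqref{R2 Legendre} and the estimates \eqref{L2 estimate R2 R2N}, \eqref{gamman est} are then proved word for word as in Theorem \ref{Th Legendre for R}, with $(\widetilde R,\beta_n,\alpha)$ replaced by $(\widetilde R^{(2)},\gamma_n,0)$: $L_2$-convergence from $R^{(2)}(x,\cdot)\in L_2[0,x]$, uniform convergence from $\widetilde R^{(2)}(x,\cdot)\in\operatorname{Lip}_{l+1-\varepsilon}$ via the theorems of Suetin and Jackson, and the two quantitative estimates from the best-$L_2$-polynomial-approximation bound \cite[Chap.~7, Thm.~6.3]{DeVoreLorentz} together with $\widetilde R^{(2)}(x,\cdot)\in\operatorname{Lip}^*_{l+3/2}(\mathbb R,2)$ and a Cauchy--Schwarz/orthogonality argument.

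Finally, the improvements under $q\in C^{2p+1}[0,b]$ follow exactly along the lines of Proposition \ref{Prop Int l}, now using the $(p+1)$-term expansion \eqref{ul asympt}: its remainder bound needs $(x^{p+2}B_{p+2})'$, hence $q\in C^{2p+1}$, one derivative more than what $u_l$ itself requires, because of the extra differentiation in $x$. Differentiating in $x$ and collecting terms as above gives $g^{(1)}(\omega)=\sum_{k=1}^{p+1}\bigl(\mathrm{coeff}_k(x)\bigr)\frac{\sqrt x J_{l+k+1/2}(\omega x)}{\omega^{l+k+1/2}}+\frac{\partial}{\partial x}\mathcal R_{p+1}(\omega,x)$, and via \eqref{Jnu Int} the analogue of \eqref{diff Rs} is $\widetilde R^{(2)}(x,t)=\sum_{k=1}^{p+1}\bigl(\mathrm{const}_k(x)\bigr)(1-\tfrac{t^2}{x^2})^{l+k}+\partial_x\widehat{\mathcal R}_{p+1}(x,t)$ with $\partial_x\widehat{\mathcal R}_{p+1}(x,\cdot)\in W_2^{l+p+3/2-\varepsilon}\cap\operatorname{Lip}_{l+p+1-\varepsilon}\cap\operatorname{Lip}^*_{l+p+3/2}(\mathbb R,2)$. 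If $l\in\mathbb N_0$ each $(1-t^2/x^2)^{l+k}$ is a polynomial, so $\widetilde R^{(2)}(x,\cdot)$ has the same smoothness as $\partial_x\widehat{\mathcal R}_{p+1}(x,\cdot)$ and all indices rise by $p$; if $l\notin\mathbb N_0$ the $k=1$ term $(1-t^2/x^2)^{l+1}$ is the bottleneck, and Lemma \ref{Lemma beta k decay special} bounds its Fourier--Legendre coefficients by $O(n^{-(2l+4)})$, so after the factor $4n+1$ one gets $|\gamma_n(x)|\lesssim n^{-\min\{l+p+1,\,2l+3\}}$ and, summing squares, the matching $L_2$ estimate, i.e.\ $r=\min\{l+p+1,2l+3\}$.
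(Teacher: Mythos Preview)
Your proposal is correct and follows essentially the same route as the paper: differentiate the Fitouhi--Hamza expansion \eqref{ul asympt} with $m=1$ in $x$, bound $\partial_x\mathcal R_1$ via the variation-of-parameters integral equation (the paper writes this out explicitly as \eqref{R1 eq}--\eqref{R1x eq} and applies successive approximations with the Green-function estimates \eqref{Gl est}--\eqref{J est} to obtain $|\mathcal R_{1,x}(\omega,x)|\lesssim|\omega|^{-2}\bigl(\tfrac{x}{b+|\omega|x}\bigr)^{l}$), then apply Paley--Wiener to $g_2(\omega)=u_l'(\omega,x)-d(\omega)\bigl(\omega b_l'(\omega x)+\tfrac{Q(x)}{2}b_l(\omega x)\bigr)$ and repeat the arguments of Proposition \ref{Prop Smooth R}, Theorem \ref{Th Legendre for R} and Proposition \ref{Prop Int l}. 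Two minor remarks: (i) your coefficient $D_1(x)$ has a small slip---the paper obtains $\tfrac{A_0}{4}\bigl(2q(x)-\tfrac{(2l+1)Q(x)}{x}\bigr)$, the discrepancy coming from the $\tfrac{1}{2\sqrt x}$ contribution when differentiating $\sqrt x$---but this does not affect the decay rate; (ii) $q\in C^1$ is needed only for the remainder bound (to make $A_2'$ continuous), not for $D_1$ itself, and your alternative route to bounding $\partial_x\mathcal R_1$ via Paley--Wiener on $\mathcal R_1$ would require justifying $x$-differentiation of $\widehat{\mathcal R}_1(x,t)$, which is not immediate---the paper's direct integral-equation argument is cleaner here.
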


\begin{proof}
Consider the asymptotic expansion \eqref{ul asympt} with $m=1$. According to \cite{FitouhiHamza} the remainder $\mathcal{R}_1(\omega, x)$ satisfies the integral equation
\begin{equation}\label{R1 eq}
\mathcal{R}_1(\omega,x) = \int_0^x G_l(\omega, x,t) \left( q(t) \mathcal{R}_1(\omega, t) + 2A_2'(t)\frac{\sqrt{t} J_{l+3/2}(\omega t)}{\omega^{l+3/2}}\right)\,dt,
\end{equation}
where (c.f., \cite{KosSakhTesh2010})
\begin{equation*}
    G_l(\omega, x, t) = -\frac{\pi}2 \sqrt{xt} \bigl(J_{l+1/2}(\omega x) Y_{l+1/2}(\omega t) - J_{l+1/2}(\omega t) Y_{l+1/2}(\omega x)\bigr)
\end{equation*}
and
\[
A_2'(x) = -\frac 12 \left( A_1''(x) - \frac{2(l+1)}{x}A_1'(x) + \left(\frac {2(l+1)}{x^2}-q(x)\right)A_1(x)\right).
\]
Observe that $A_1(x) = \frac{A_0}2 Q(x)$, hence
\[
\begin{split}
A_2'(x) &= -\frac{A_0}2\left(\frac{q'(x)}2 - \frac{l+1}x q(x) + \frac{l+1}{x^2}Q(x) - \frac{q(x)Q(x)}2\right)\\
& = -\frac{A_0}2 \left(\frac{q'(x)}2 - \frac{l+1}{x^2}\int_0^x tq'(t)\,dt - \frac {q(x)Q(x)}2\right).
\end{split}
\]
Since $q\in C^1[0,b]$, $A_2'\in C(0,b]$ and can be extended to a $C[0,b]$ function by continuity. Denote $A_2:=\max_{[0,b]}|A_2'(x)|$.

The derivative $\mathcal{R}_{1,x}:=\frac{\partial}{\partial x}\mathcal{R}_1$ satisfies
\begin{equation}\label{R1x eq}
    \mathcal{R}_{1,x} (\omega, x) = \int_0^x \frac{\partial}{\partial x} G_l(\omega, x, t) \left( q(t) \mathcal{R}_1(\omega, t) + 2A_2'(t)\frac{\sqrt{t} J_{l+3/2}(\omega t)}{\omega^{l+3/2}}\right)\,dt.
\end{equation}
Recall the following estimates for the functions $G_l$ and $J_{l+3/2}$ (see \cite[Appendix 1]{KosSakhTesh2010}), here $\omega\in\mathbb{R}$ and $t\le x$.
\begin{align}
    \left|G_l(\omega, x,t)\right| & \le C\left(\frac{x}{b+|\omega|x}\right)^{l+1}\left(\frac{b+|\omega|t}{t}\right)^{l}\theta(t),\label{Gl est}\\
    \left|\frac{\partial}{\partial x}G_l(\omega, x,t)\right| &\le C\left(\frac{x}{b+|\omega|x}\right)^{l}\left(\frac{b+|\omega|t}{t}\right)^{l}\theta(t),\label{Glx est}
\end{align}
where
$\theta(t) = 1$ for $l>-1/2$ and $\theta(t) = 1-\log (t/b)$ for $l=-1/2$, and
\begin{equation}\label{J est}
    \sqrt{x}\left|J_{l+3/2}(\omega x)\right| \le C_1\sqrt{\frac 2\pi} \omega^{l+3/2}\left(\frac{x}{b+|\omega|x}\right)^{l+2}.
\end{equation}
Applying the successive approximations method similarly to  \cite[Chap. 6, \S10]{Olver}) one easily obtains that
\begin{equation}\label{R1 and R1x estimates}
    |\mathcal{R}_1(\omega, x)|\le \frac{\widetilde C}{|\omega|^2}\left(\frac x{b+|\omega| x}\right)^{l+1}\qquad\text{and}\qquad |\mathcal{R}_{1,x}(\omega, x)|\le \frac{\widetilde C_1}{|\omega|^2}\left(\frac x{b+|\omega| x}\right)^{l}.
\end{equation}
Indeed, consider
\[
r_0(x) := \int_0^x G_l(\omega, x,t) 2A_2'(t)\frac{\sqrt{t} J_{l+3/2}(\omega t)}{\omega^{l+3/2}}\,dt \qquad \text{and}\qquad r_{n+1}(x) := \int_0^x G_l(\omega, x,t) q(t) r_n(t)\,dt.
\]
Then using the inequalities \eqref{Gl est}, \eqref{Glx est}, \eqref{J est} we obtain that
\[
|r_0(x)| \le 2A_2CC_1\sqrt{\frac{2}{\pi}}\left(\frac x{b+|\omega| x}\right)^{l+1}\int_0^x \left(\frac t{b+|\omega| t}\right)^{2}\theta(t)\,dt\le \frac{C_2}{\omega^2}\left(\frac x{b+|\omega| x}\right)^{l+1},
\]
where we used that $\frac {t}{b+|\omega|t}\le \frac 1{|\omega|}$ and  $\int_0^x \theta(t)\,dt \le \int_0^b\theta(t)\,dt <\infty$. It follows by induction that
\[
|r_n(x)| \le \frac{C_2}{n!|\omega|^2} \left(\frac x{b+|\omega| x}\right)^{l+1}\left(C\int_0^x \frac {t\tilde q(t)}{b+|\omega| t}\,dt\right)^n,
\]
where $\tilde q(t)$ is the same as in the proof of Proposition \ref{Prop Smooth R}.
Summing up all the functions $r_n$ we obtain the first estimate in \eqref{R1 and R1x estimates} with $\tilde C:= C_2\exp\left(C\int_0^b \frac {t\tilde q(t)}{b+|\omega| t}\,dt\right)<\infty$. The second estimate follows directly from the first estimate and \eqref{R1x eq}.

Differentiating \eqref{ul asympt} with respect to $x$ we obtain that
\begin{equation}\label{R1x}
\mathcal{R}_{1,x}(\omega, x) = u_l'(\omega, x) - d(\omega) \left(\omega b_l'(\omega x) + \frac{Q(x)}2 b_l(\omega x)\right) - \frac{A_0 \sqrt{x}J_{l+3/2}(\omega x)}{4\omega^{l+3/2}}\left(2q(x) - \frac{(2l+1)Q(x)}{x}\right),
\end{equation}
where we used the formula $J_\nu'(z) = J_{\nu-1}(z)-\frac{\nu}z J_\nu(z)$. Consider the function
\[
g_2(\omega):= u_l'(\omega, x) - d(\omega) \left(\omega b_l'(\omega x) + \frac{Q(x)}2 b_l(\omega x)\right).
\]
As follows from \eqref{R1 and R1x estimates}, $|\mathcal{R}_{1,x}(\omega,x)|\le \frac{\widetilde C_1}{\omega^{l+2}}$. The last term in \eqref{R1x} decays as $\omega^{-l-2}$ when $|\omega|\to\infty$. Hence the function $g_2(\omega)$ also decays as $\omega^{-l-2}$. Moreover, $g_2(\omega)$ is an entire even function of the complex variable $\omega$ and similarly to the proof of Proposition \ref{Prop Smooth R} we obtain the existence and smoothness of the function $\widetilde R^{(2)}$.

From (\ref{R2 Legendre}) similarly to the proof of Theorem
\ref{Th Legendre for R} we obtain
\begin{equation}
\gamma_{n}(x)    =\left(  4n+1\right)  \int_{0}^{x}R^{(2)}(x,t)P_{2n}\left(
\frac{t}{x}\right)  \,dt =\left(  4n+1\right)  \sum_{k=0}^{n}\frac{l_{2k,2n}}{x^{2k}}\int_{0}%
^{x}R^{(2)}(x,t)t^{2k}\,dt. \label{gamma via int}%
\end{equation}
In order to calculate the last integral we expand all the terms of the equality \eqref{ul prime R2} into the series with respect to $\omega$ (for the function $u_l'$ we differentiate \eqref{u via SPPS}) and compare coefficients at equal powers of $\omega$. We obtain that
\[
\int_{0}^{x}R^{(2)}(x,t)t^{2k}\,dt=\varphi_{k}^{\prime}(x)-c_{k,l}%
x^{2k+l}\left(  \left(  2k+l+1\right)  +\frac{xQ(x)}{2}\right)  .
\]
Substitution of this expression into (\ref{gamma via int}) gives us
(\ref{gamma n}).

Convergence of the series \eqref{R2 Legendre} and the estimates \eqref{L2 estimate R2 R2N} and \eqref{gamman est} can be obtained similarly to the proof of Theorem \ref{Th Legendre for R}.

Suppose that $q\in C^{2p+1}[0,b]$. Consider the asymptotic expansion \eqref{ul asympt} with $m=p+1$. The remainder $\mathcal{R}_{p+1}$ satisfies equations similar to \eqref{R1 eq} and \eqref{R1x eq}. Applying the successive approximations method one can obtain the following estimate for the derivative $|\mathcal{R}_{p+1,x}(\omega, x)|\le \frac {C_3}{|\omega|^{p+2}}\Bigl(\frac{x}{b+|\omega|x}\Bigr)^{l}$.
Observe that
$\Bigl(A_k(x)\frac{\sqrt x J_{l+k+1/2}(\omega x)}{\omega^{l+k+1/2}}\Bigr)'= \Bigl(A_k'(x)\sqrt x- \frac{(l+k)A_k(x)}{\sqrt x}\Bigr)\frac{J_{l+k+1/2}(\omega x)}{\omega^{l+k+1/2}} + A_k(x)\sqrt x \frac{J_{l+k-1/2}(\omega x)}{\omega^{l+k-1/2}}$, i.e., for each fixed $x$ the expressions $g_2$ and $\mathcal{R}_{p+1,x}$ differ by a linear combination of the terms $\frac{J_{l+k+1/2}(\omega x)}{\omega^{l+k+1/2}}$. Now the last statement of the theorem can be obtained following the proof of Proposition \ref{Prop Int l}.
\end{proof}

\begin{theorem}Under the conditions of Theorem \ref{Th R2 Legendre}, the $x$-derivative of the regular solution $u_{l}(\omega,x)$ of
\eqref{Bessel type} satisfying the asymptotic relation $u_{l}(\omega,x)\sim
x^{l+1}$ when $x\rightarrow0$ has the form
\begin{equation}
u_{l}^{\prime}(\omega,x)=d(\omega)\left(  \omega b_{l}^{\prime}(\omega x)+\frac
{Q(x)}{2}b_{l}\left(  \omega x\right)  \right)  +\sum_{n=0}^{\infty}\left(
-1\right)  ^{n}\gamma_{n}(x)j_{2n}(\omega x) \label{ul prime via gammas}%
\end{equation}
where the coefficients $\gamma_{n}$ are defined by \eqref{gamma n}. For the
difference between $u_{l}^{\prime}(\omega,x)$ and
\begin{equation}\label{ul prime N}
\overset{\circ}{u}
_{l,N}(\omega,x):=d(\omega)\left(  \omega b_{l}^{\prime}(\omega x)+\frac{Q(x)}%
{2}b_{l}\left(  \omega x\right)  \right)  +\sum_{n=0}^{N}\left(  -1\right)
^{n}\gamma_{n}(x)j_{2n}(\omega x)
\end{equation}
the following inequalities are valid
\begin{equation}
\left\vert u_{l}^{\prime}(\omega,x)-\overset{\circ}{u}_{l,N}(\omega
,x)\right\vert \leq \sqrt{x} \varepsilon_N(x)\qquad\text{for all }\omega\in\mathbb{R},
\label{diff real}
\end{equation}
and
\begin{equation}
\left\vert u_{l}^{\prime}(\omega,x)-\overset{\circ}{u}_{l,N}(\omega
,x)\right\vert \leq \left(\frac{\sinh(2Cx)}{2C}\right)^{1/2}\varepsilon_{N}(x)
\qquad\text{for all }\omega\in\mathbb{C},\ \left\vert \operatorname*{Im}\omega\right\vert \leq C,\ C\geq0,
\label{diff complex}
\end{equation}
where
$\varepsilon_N$ is a sufficiently small nonnegative function such that
$\| R^{(2)}(x,\cdot)-R_{N}^{(2)}(x,\cdot)\|_{L_2[0,x]} \leq\varepsilon_N(x)$, which exists due to
Theorem \ref{Th R2 Legendre}.
\end{theorem}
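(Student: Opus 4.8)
The plan is to mirror the structure of the proof of Theorem \ref{Th ul Neumann}, replacing the solution $u_l$ by its derivative $u_l'$ and the kernel $R$ by $R^{(2)}$. The starting point is the integral representation \eqref{ul prime R2}, which was established in Theorem \ref{Th R2 Legendre}. First I would substitute the Fourier--Legendre expansion \eqref{R2 Legendre} of $R^{(2)}(x,t)$ into the integral $\int_0^x R^{(2)}(x,t)\cos\omega t\,dt$ and use formula 2.17.7 from \cite{Prudnikov} (exactly as in the proof of Theorem \ref{Th ul Neumann}) to get
\[
\int_0^x R^{(2)}(x,t)\cos\omega t\,dt = \sum_{n=0}^\infty \frac{\gamma_n(x)}{x}\int_0^x P_{2n}\left(\frac tx\right)\cos\omega t\,dt = \sum_{n=0}^\infty (-1)^n \gamma_n(x) j_{2n}(\omega x),
\]
which yields \eqref{ul prime via gammas}. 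The termwise integration is legitimate because the series \eqref{R2 Legendre} converges in $L_2[0,x]$ (Theorem \ref{Th R2 Legendre}) and $\cos\omega t\in L_2[0,x]$.

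For the error estimates I would subtract \eqref{ul prime N} from \eqref{ul prime via gammas}, observing that the non-series terms cancel and the remaining sum is precisely $\int_0^x\bigl(R^{(2)}(x,t)-R_N^{(2)}(x,t)\bigr)\cos\omega t\,dt$. Applying the Cauchy--Schwarz inequality gives
\[
\left|u_l'(\omega,x)-\overset{\circ}{u}_{l,N}(\omega,x)\right| \le \bigl\|R^{(2)}(x,\cdot)-R_N^{(2)}(x,\cdot)\bigr\|_{L_2[0,x]}\left(\int_0^x |\cos^2\omega t|\,dt\right)^{1/2} \le \varepsilon_N(x)\left(\int_0^x |\cos^2\omega t|\,dt\right)^{1/2}.
\]
For real $\omega$ one bounds $\int_0^x|\cos^2\omega t|\,dt\le x$, giving \eqref{diff real}. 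For complex $\omega$ with $|\operatorname{Im}\omega|\le C$ one uses the same computation as in the proof of Theorem \ref{Th ul Neumann}, namely $\int_0^x|\cos^2\omega t|\,dt = \frac{\sinh(2|\operatorname{Im}\omega|x)}{4|\operatorname{Im}\omega|}+\frac x2 \le \frac{\sinh(2|\operatorname{Im}\omega|x)}{2|\operatorname{Im}\omega|}$ together with the monotonicity of $t\mapsto \sinh t/t$, which yields \eqref{diff complex}.

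The remaining point is to justify that the series in \eqref{ul prime via gammas} actually converges (uniformly in $x$, and uniformly in $\omega$ on bounded sets); this follows from the decay estimate \eqref{gamman est} for $|\gamma_N(x)|$ combined with the bound $|j_n(z)|\le \sqrt\pi|z/2|^n/\Gamma(n+3/2)$ from \cite[(9.1.62)]{Abramowitz}, exactly as the analogous convergence was argued for \eqref{ul via betas}. I do not anticipate a serious obstacle here: every ingredient — the Fourier--Legendre representation of $R^{(2)}$, the $L_2$-approximation estimate \eqref{L2 estimate R2 R2N}, and the coefficient decay \eqref{gamman est} — has already been supplied by Theorem \ref{Th R2 Legendre}, so the proof is essentially a transcription of the proof of Theorem \ref{Th ul Neumann} with $(R,\beta_n,u_l)$ replaced by $(R^{(2)},\gamma_n,u_l')$ and with the leading term $d(\omega)b_l(\omega x)$ replaced by $d(\omega)\bigl(\omega b_l'(\omega x)+\frac{Q(x)}{2}b_l(\omega x)\bigr)$.
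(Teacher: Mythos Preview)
Your proposal is correct and matches the paper's own proof essentially line for line: the paper simply says to substitute \eqref{R2 Legendre} into \eqref{ul prime R2} and use formula 2.17.7 from \cite{Prudnikov} to obtain \eqref{ul prime via gammas}, and then states that \eqref{diff real} and \eqref{diff complex} follow ``in a complete analogy with the proof of Theorem \ref{Th ul Neumann}.'' Your write-up is a faithful (and slightly more detailed) execution of exactly that.
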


\begin{proof}
Substitution of (\ref{R2 Legendre}) into (\ref{ul prime R2}) together with the
formula 2.17.7 from \cite[p. 433]{Prudnikov} gives us
(\ref{ul prime via gammas}). The inequalities (\ref{diff real}) and
(\ref{diff complex}) are obtained in a complete analogy
with the proof of Theorem \ref{Th ul Neumann}.
\end{proof}

\section{Recurrent equations for $\gamma_{n}$}

From (\ref{ul via betas}) we have that
\[
u_{l}^{\prime}(\omega,x)=\omega d(\omega)b_{l}^{\prime}(\omega x)+\sum_{n=0}^{\infty
}\left(  -1\right)  ^{n}\left(  \beta_{n}^{\prime}(x)j_{2n}(\omega
x)-\omega\beta_{n}(x)j_{2n+1}(\omega x)+\frac{2n}{x}\beta_{n}(x)j_{2n}(\omega
x)\right)  .
\]
Comparing this expression with (\ref{ul prime via gammas}) we obtain the
equality%
\begin{multline*}
\frac{d(\omega)Q(x)}{2}b_{l}\left(  \omega x\right)  +\sum_{n=0}^{\infty
}\left(  -1\right)  ^{n}\gamma_{n}(x)j_{2n}(\omega x)\\
=\sum_{n=0}^{\infty
}\left(  -1\right)  ^{n}\left(  \beta_{n}^{\prime}(x)j_{2n}(\omega
x)-\omega\beta_{n}(x)j_{2n+1}(\omega x)+\frac{2n}{x}\beta_{n}(x)j_{2n}(\omega
x)\right)  .
\end{multline*}
Using (\ref{recusrive Bessel}) and rearranging the terms we arrive at the
equality%
\begin{equation}
\frac{d(\omega)Q(x)b_{l}(\omega x)}{2\omega x}-\left(  \beta_{0}^{\prime
}(x)-\gamma_{0}(x)\right)  j_{-1}(\omega x)=\sum_{n=1}^{\infty}\tilde\alpha
_{n}(x)j_{2n-1}(\omega x) \label{equality 2}%
\end{equation}
where
\[
\tilde\alpha_{n}:=\left(  -1\right)  ^{n}\left[  \frac{1}{4n+1}\left(  \beta
_{n}^{\prime}-\gamma_{n}+\frac{2n}{x}\beta_{n}\right)  -\frac{1}{4n-3}\left(
\beta_{n-1}^{\prime}-\gamma_{n-1}-\frac{2n-1}{x}\beta_{n-1}\right)  \right]
\]
for $n=1,2,\ldots$ and $\beta_{0}^{\prime}-\gamma_{0}=x^{l+1}Q/2$.

Comparison of (\ref{equality 2}) with (\ref{equality 1}) and application of a
similar procedure to that from Section \ref{Sect Recurrent beta} leads to the
relations%
\begin{align*}
\frac{\pi\tilde\alpha_{n}(x)}{\left(  4n-1\right)  }  &  =Q(x)\int_{0}^{\infty}%
\frac{d(\omega)b_{l}(\omega x)}{\omega}j_{2n-1}(\omega x)d\omega\\
&  =\frac{Q(x)}{4}\frac{x^{l+1}\sqrt{\pi}\Gamma(l+2)\Gamma(l+3/2)\Gamma
(n-1/2)}{\Gamma(l-n+2)\Gamma(n+1)\Gamma(n+l+3/2)}%
\end{align*}
and hence
\begin{equation}\label{alpha n tilde eq}
\tilde\alpha_{n}(x)=\frac{\left(  4n-1\right)  }{4\sqrt{\pi}}\frac{Q(x)x^{l+1}%
\Gamma(l+2)\Gamma(l+3/2)\Gamma(n-1/2)}{\Gamma(l-n+2)\Gamma(n+1)\Gamma
(n+l+3/2)} =:C_nQ(x) x^{l+1}.
\end{equation}

Equations \eqref{alpha n tilde eq} together with the recursive formulas \eqref{etan kappan}--\eqref{beta_n alt} can be used to calculate the coefficients $\gamma_n$ alternatively to the formulas \eqref{gamma n}. We start with
\begin{equation}\label{gamma 0}
\gamma_0 = \beta_0' + x^{l+1}Q/2 = u_0' -(l+1)x^l + x^{l+1}Q/2
\end{equation}
and compute recursively for $n\ge 1$
\begin{equation}\label{gamma_n alt}
\begin{split}
\gamma_n &= \frac{4n+1}{4n-3}\left[\gamma_{n-1} + (4n-1)\left( \frac{2u_0'\theta_n}{x^{2n}} + \frac{2\eta_n}{u_0 x^{2n}} - \frac{\beta_{n-1}}{x}\right)\right] \\
&\quad+(-1)^n(4n+1)\left[\frac{B_n}{x^{2n}}\left(\mu_n u_0'+\frac{\kappa_n}{u_0}\right)  - C_n Q(x)x^{l+1}\right].
\end{split}
\end{equation}

\section{Numerical results}
The main ingredients for the construction and application of the approximate solution $u_{l;N}$ and its approximate derivative $\overset{\circ}{u}_{l,N}$ are the coefficients $\beta_n$ and $\gamma_n$. Unfortunately, we are not aware of any single non-zero potential $q$ for which one can obtain these coefficients in a closed form. They have to be calculated numerically. Our experiments show that even hundreds of the coefficients $\beta_n$ and $\gamma_n$ can be easily computed within seconds without any difficulty. Below we explain some details. We also refer the reader to \cite{CKT2013}, \cite{KT AnalyticApprox} and \cite{KNT 2015} where many aspects of the numerical implementation are discussed in detail.

The first coefficients $\beta_0$ and $\gamma_0$ are given by \eqref{beta zero} and \eqref{gamma 0} in terms of the particular solution $u_0$ of equation \eqref{PartSolEq} satisfying asymptotic conditions \eqref{SolAsymptotic} and \eqref{DerSolAsymptotic}. Such solution together with its derivative can be computed using the SPPS representation \cite[Section 3]{CKT2013}. The assumption for the solution $u_0$ to be non-vanishing automatically holds if $q(x)\ge 0$, $x\in (0,b]$. For other cases one may need to apply the spectral shift technique as described in \cite{CKT2013}, \cite{CKT2015}.

As was mentioned in \cite{KNT 2015} for the non-singular case, the direct formulas lead to a rapid growth of the error in the computed coefficients $\beta_k$ and $\gamma_k$ and are not recommended for numerics. The same happens with the formulas \eqref{beta n} and \eqref{gamma n}, they allow one to calculate only 10--15 coefficients $\beta_k$ and $\gamma_k$ in the machine precision. However one still may apply them when arbitrary precision arithmetics is used. In the present paper we neither utilize nor present any illustration of the numerical performance of the formulas \eqref{beta n} and \eqref{gamma n}.

Instead, the recurrent formulas \eqref{etan kappan}--\eqref{beta_n alt} and \eqref{gamma_n alt} show an excellent computational stability allowing one to compute easily even hundreds of the coefficients. All the functions involved were represented by their values on the uniform mesh. We used a somewhat overwhelming number of mesh points (like 20--50 thousands) in order to make the integration errors negligible and to concentrate mainly on the numerical performance of the proposed formulas. It is worth emphasizing that even in this case all the reported calculations took only several seconds. The integrals in \eqref{etan kappan} and \eqref{thetan mun} were calculated using the modified 6 point Newton-Cottes rule. This rule consists in interpolating the function values at these 6 points by a fifth order polynomial and using the integral of this polynomial as the approximation for the indefinite integral.

We would like to point out that the numerical integration in \eqref{thetan mun} may be tricky due to the division by $u_0^2$, a function behaving near zero as $x^{2l+2}$. Even small errors in the values of the functions $\eta_n$ and $\kappa_n$ near zero can lead to large erroneous values after dividing by $u_0^2$. As a workaround we chose the following strategy. We simply ignored (replaced by 0) several first values of the integrands in \eqref{thetan mun} prior to numerical integration. Due to 6 point integration rule utilized, we used the following simple cut-off criterion. For every 6 integrand values $y_0,\ldots,y_5$ on the consecutive mesh points we calculated the expression
\[
\Delta_5:= y_0-5y_1+10y_2-10y_3+5y_4-y_5
\]
(related to the fifth order divided difference) and compared it to the two smallest absolute values of the numbers $y_0,\ldots,y_5$. We started the integration rule from the first 6-tuple for which the quantity $|\Delta_5|$ was not significantly larger than the two smallest absolute values. This simple criterion resulted to be sufficient to deliver acceptable numerical results.

\subsection{Analysis of the decay rate of the coefficients $\beta_n$ and $\gamma_n$}\label{Subsect decay analysis}
Absolute values of the coefficients $\beta_k$ decrease as $k\to\infty$, see the estimates \eqref{betan est}, \eqref{betan estimate Int l} and \eqref{betan estimate NonInt l}.
However due to the presence of the term $\frac{4n+1}{4n-3}\beta_{n-1}$ in the formula \eqref{beta_n alt} the error in one computed coefficient $\beta_n$ propagates to all further coefficients. I.e., when one computes large numbers of the coefficients $\beta_n$, their absolute values reach some floor value and stabilize. The same happens with the coefficients $\gamma_n$. A simple error measure can be derived taking $t=x$ in \eqref{R(x,t)} and \eqref{R2 Legendre}. One has
\begin{equation}\label{Check for precision}
    \sum_{n=0}^\infty \frac{\beta_n(x)}{x} = R(x,x) = 0\qquad \text{and}\qquad
    \sum_{n=0}^\infty \frac{\gamma_n(x)}{x} = R^{(2)}(x,x) = 0,
\end{equation}
and the discrepancy of the truncated series from zero provides some insight on how good the approximation is.

Consider $b=\pi$ and $q=x^2$ in \eqref{I1}. We computed the coefficients $\beta_n$ for $n\le 100$ for several different values of the parameter $l$. On Figure \ref{DecayFig1} we present the plot of the values $|\beta_n(\pi)|$ vs.\ $n$. We chose log-log scale graph to reveal a possible power law decay rate of the coefficients. As one can see from the graph, the absolute values $|\beta_n(\pi)|$ indeed obey a power law decay whenever $l\not\in\mathbb{N}$, and a faster than polynomial decay for $l\in\mathbb{N}$, c.f., Proposition \ref{Prop Int l}. We  estimated the decay rate degree $r$ in the power law $|\beta_n(\pi)|\approx c n^{-r}$ and obtained that $r\approx 2l+3$ for $l\not\in\mathbb{N}$, c.f., \eqref{betan estimate NonInt l}.

\begin{figure}[htb!]
\centering
\includegraphics[bb=126 287 486 504, width=5in,height=3in]
{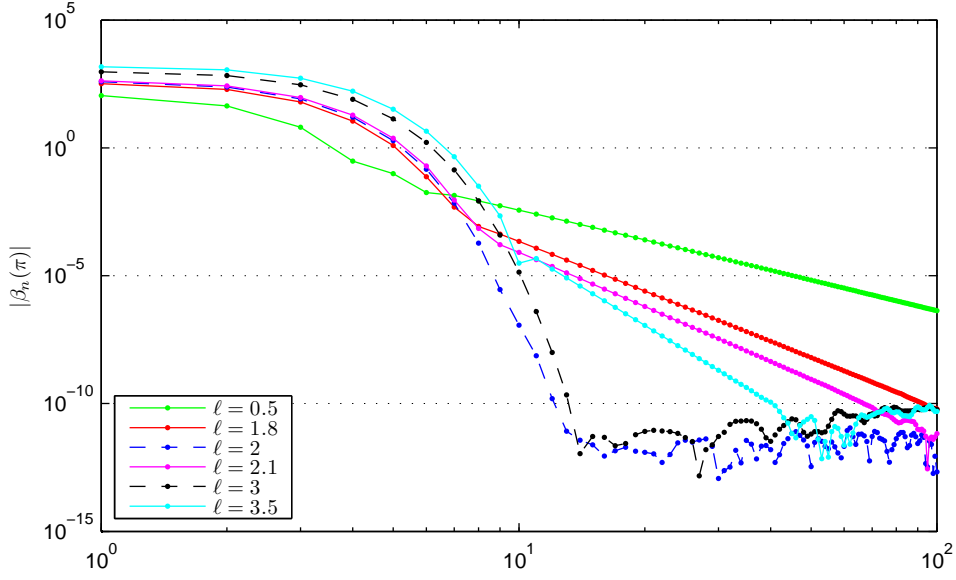}
\caption{Illustration of the decay of the numbers $|\beta_n(\pi)|$ for $q(x)=x^2$ and for different values of the parameter $l$ in \eqref{I1}. Dashed lines correspond to integer values of $l$, solid lines correspond to non-integer values of $l$.}
\label{DecayFig1}
\end{figure}

However we observed that the smoothness requirements on the potential $q$ in Proposition \ref{Prop Int l} and Theorem \ref{Th R2 Legendre} look to be excessive. For that we considered several potentials,
\begin{equation}\label{DecayPotentials}
q_1(x)=x^2,
\qquad
q_2(x) = \sqrt{\pi^2-x^2},
\qquad
q_3(x) = \frac 1x, \qquad
q_k(x) =\begin{cases}
0, & x\le \pi/2,\\
(x-\pi/2)^{k-4}, & x>\pi/2,\\
& k\in\{4,5,6\},
\end{cases}
\end{equation}
computed the coefficients $\beta_n$ and $\gamma_n$, $n\le 100$ for different non integer values of $l$ and found the degrees $r$ and $s$ in the power law approximations $|\beta_n(\pi)|\approx c_1 n^r$ and $|\gamma_n(\pi)|\approx c_2 n^s$. The obtained values of $r$ and $s$ are presented on Figure \ref{DecayFig2}.

The first potential illustrates that one can not expect an improvement of the estimate \eqref{betan estimate NonInt l} even for infinitely smooth potentials (c.f., Remark \ref{Rmk Saturation}). However, it is illustrated by the potentials $q_2$ and $q_3$ that the coefficients $\beta_n$ and $\gamma_n$ can decay as (or closely to) $n^{-2l-3}$ even for potentials possessing singularities or unbounded derivatives at the endpoints. The situation changes when the potentials are not sufficiently smooth inside the interval $(0,b)$, as illustrated by $q_4$, $q_5$ and $q_6$. For small values of the parameter $l$ the coefficients $\beta_n$ and $\gamma_n$ still decay as $n^{-2l-3}$, while for larger values of $l$ the decay rate degree becomes smaller.

\begin{figure}[htb!]
\centering
Decay rate degree of the numbers $|\beta_n(\pi)|$\\
\includegraphics[bb=198 309 414 482, width=3in,height=2.4in]
{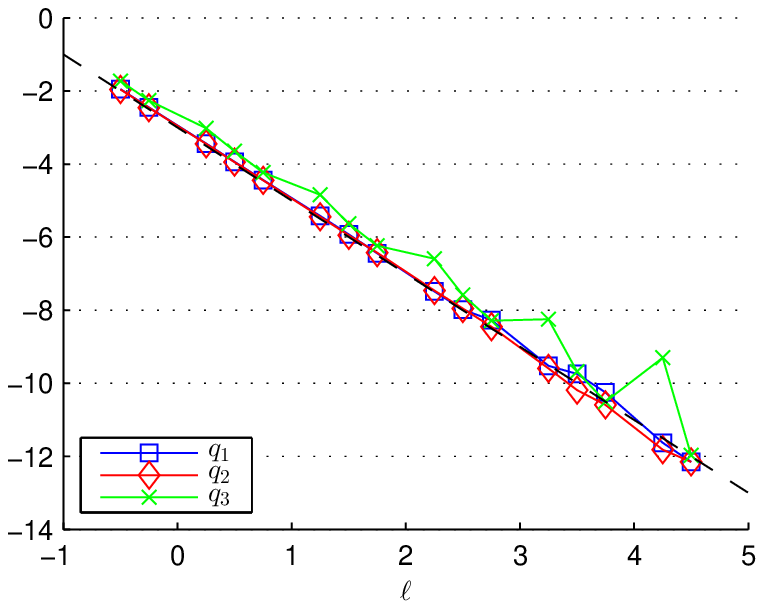}\quad
\includegraphics[bb=198 309 414 482, width=3in,height=2.4in]
{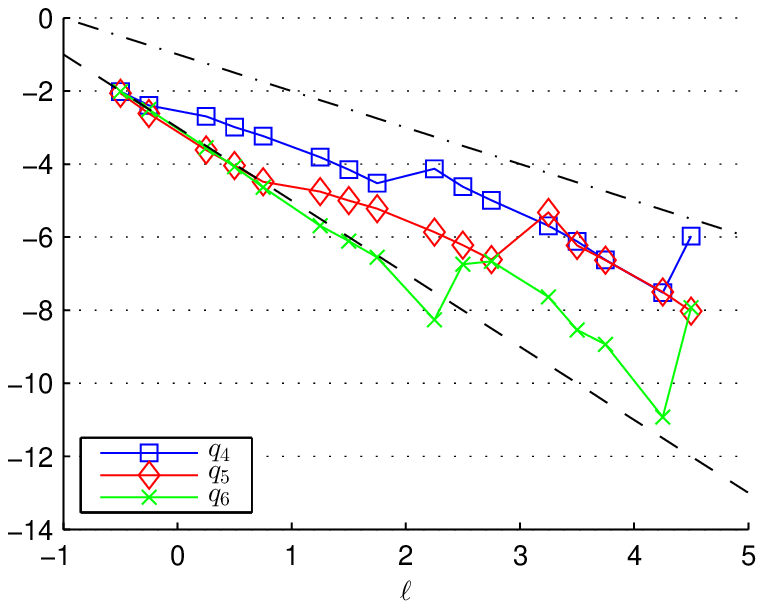}

Decay rate degrees of the numbers $|\gamma_n(\pi)|$\\
\includegraphics[bb=198 309 414 482, width=3in,height=2.4in]
{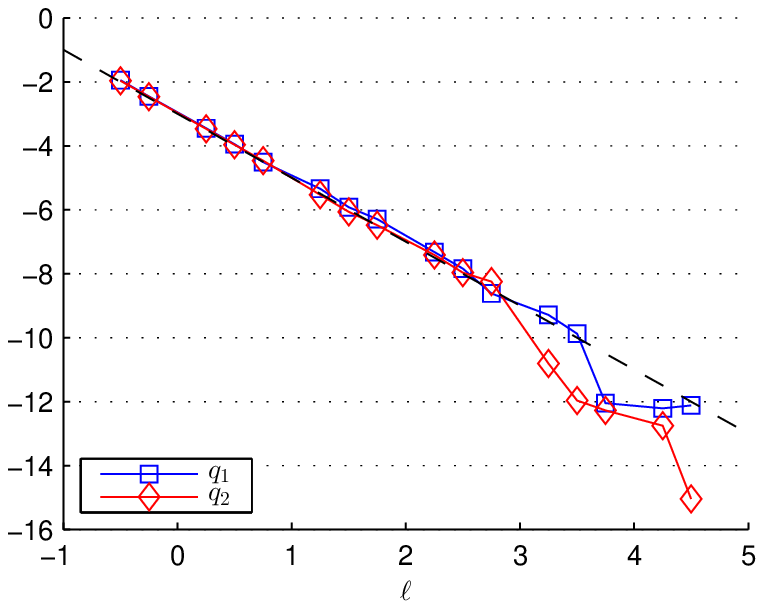}\quad
\includegraphics[bb=198 309 414 482, width=3in,height=2.4in]
{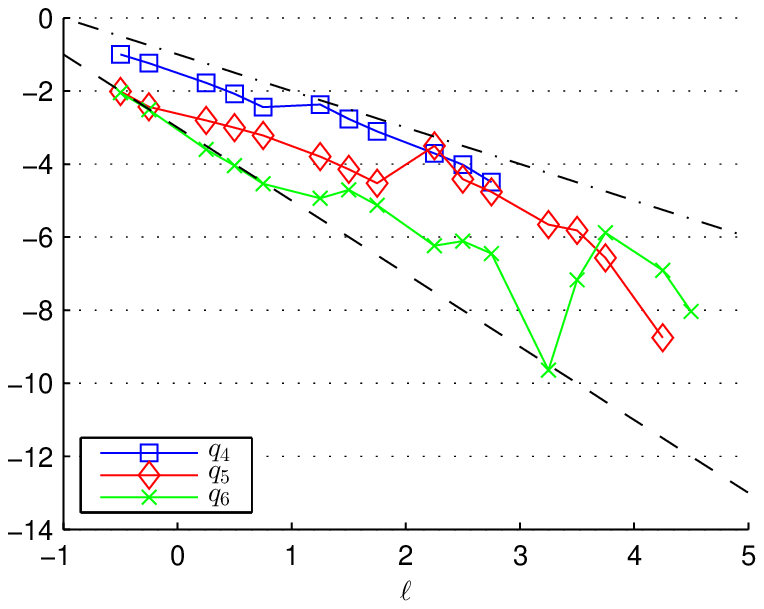}
\caption{Illustration of the decay rate degree of the numbers $|\beta_n(\pi)|$ (two upper plots) and $|\gamma_n(\pi)|$ (two lower plots) for different potentials $q_i$, $i=1,\ldots ,6$ given by \eqref{DecayPotentials} as functions of the parameter $l$. Additionally we plot the lines $y=-2l-3$ (dashed line) and $y=-l-1$ (dash-dot line).}
\label{DecayFig2}
\end{figure}

For integer values of $l$ the estimate \eqref{betan estimate Int l} predicts a faster than polynomial decay of the coefficients $\beta_n$ in the case of a $C^\infty$-potential and guaranties a polynomial decay rate for potentials of finite smoothness. We verified this numerically considering the potentials
\begin{equation}\label{DecayPotentials2}
q_k(x)= \begin{cases}
1, & x\le \pi/2,\\
1+(x-\pi/2)^k, & x> \pi/2,\quad k=0,\ldots,5,
\end{cases}
\end{equation}
and comparing the degree of decay rate as in the previous experiments. On Figure \ref{DecayFig3} we present the plots of the values $|\beta_n(\pi)|$ and $|\gamma_n(\pi)|$ vs.\ $n$. For all calculations we took $l=2$. As one can see from the plots, the slope of the lines (corresponding to the decay rate degree) indeed increases when the potential smoothness increases by 2, as predicted by Proposition \ref{Prop Int l}, however as in the previous experiment, the increase of the slope is by 2, not by 1.

A more detailed study of the observed phenomena is left for a future work.

\begin{figure}[htb!]
\centering
\includegraphics[bb=198 309 414 482, width=3in,height=2.4in]
{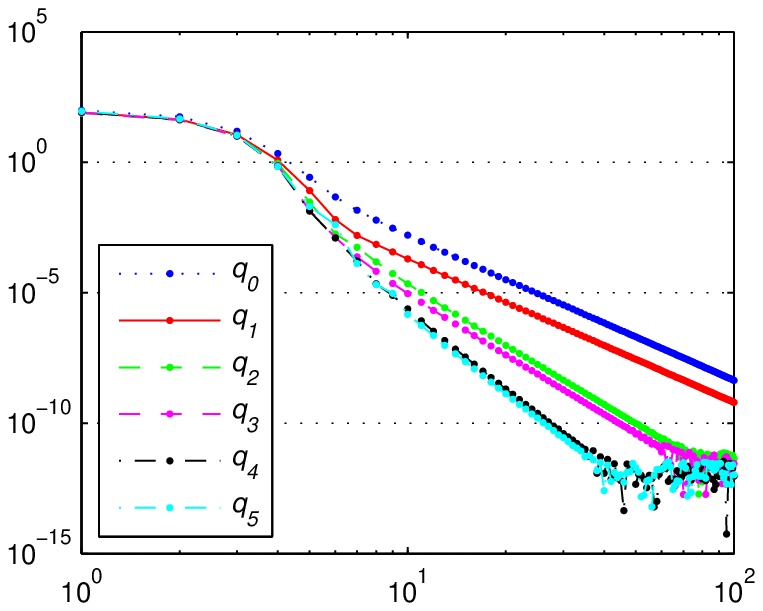}\quad
\includegraphics[bb=198 309 414 482, width=3in,height=2.4in]
{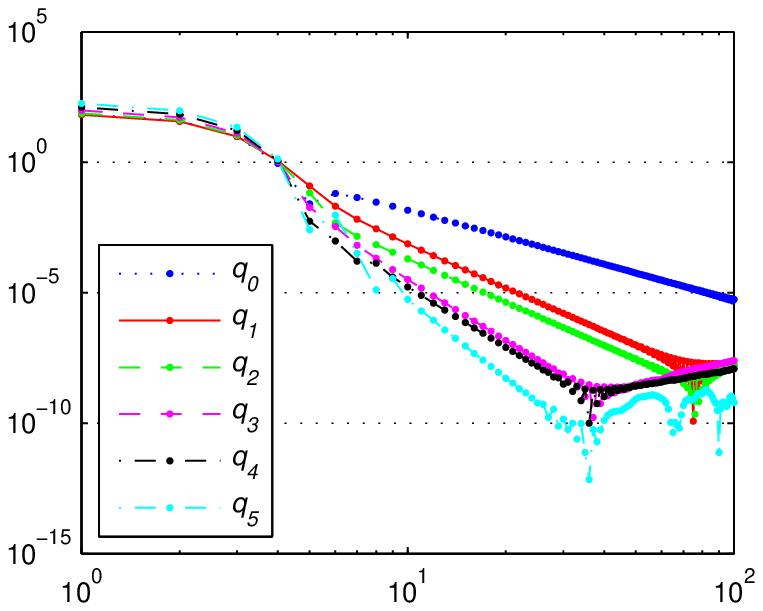}
\caption{Illustration of the decay rate of the numbers $|\beta_n(\pi)|$ (left plot) and $|\gamma_n(\pi)|$ (right plot) for different potentials $q_k$, $k=0,\ldots ,5$ given by \eqref{DecayPotentials2}.}
\label{DecayFig3}
\end{figure}

\subsection{Solution of spectral problems}
One of the possible applications of the proposed representations \eqref{ul via betas} and \eqref{ul prime via gammas} is to the approximate solution of spectral problems. We emphasize that the idea of this subsection is to illustrate the convergence estimates and error bounds from Theorem \ref{Th ul Neumann} and Theorem \ref{Th R2 Legendre} rather than to compete with the best available software packages such as \textsc{Matslise} \cite{LedouxVDaele}.
Our implementation of the approximate method based on the proposed formulas is straightforward. Clearly the method can benefit, e.g.,  from interval subdivision techniques combined with the representation proposed in \cite{KNT 2015}, and we are sure that a robust software package can be created, however we left these tasks for future research.

In all performed numerical experiments the coefficients $\beta_n$ and $\gamma_n$ were computed as was explained at the beginning of this section.  Matlab 2012 in machine precision was used. The optimal number $N$ of terms for the approximations \eqref{ulN} and \eqref{ul prime N} was estimated using the formulas \eqref{Check for precision} for $x=b$, as the value when the partial sums of the series in \eqref{Check for precision} reach the machine-precision induced floor. The upper values like $N=169$ in the following examples appear due to our straightforward implementation of the formula \eqref{beta_n alt}, larger values of $N$ cause the machine precision overflow in computation of $\Gamma(N+l+3/2)$. In all the proposed spectral problems Wolfram Mathematica 8 was able to find the regular solution in the explicit form, which was used to calculate the exact eigenvalues.

\begin{example}\label{Ex SP1}
Consider the following spectral problem
\begin{gather*}
-u''+\left(\frac{l(l+1)}{x^2}+x^2\right) u=\omega^2 u, \quad 0\le x\le \pi,\\
u(\omega, \pi)=0.
\end{gather*}
The value $l=3/2$ was considered in \cite[Example 2]{BoumenirChanane} and \cite[Example 7.3]{CKT2013}. We compared the results with those obtained using \eqref{ulN} with $N=100$. Exact eigenvalues together with the absolute errors of the approximate eigenvalues obtained using different methods are presented in Table \ref{Ex1Table1}. The proposed method is abbreviated as NSBF (from Neumann series of Bessel functions). As one can see from the results, the proposed method is comparable with the SPPS method for lower-index eigenvalues and is clearly superior for the 100th eigenvalue. Additionally it is much faster than the SPPS method.

\begin{table}[htb!]
\centering
\begin{tabular}{cccccc}\hline
$n$ & $\omega_{n}$ (Exact/\textsc{Matslise})  & $\Delta \omega_n$ (NSBF) & $\Delta \omega_n$ (SPPS) & $\Delta \omega_n$ (SLEIGN2) & $\Delta \omega_n$ (\cite{BoumenirChanane})\\\hline
1 &  $2.46294997397397$ & $1.4\cdot 10^{-14}$ & $2.7\cdot 10^{-13}$ & $5.4\cdot 10^{-8}$ & $9.4\cdot 10^{-7}$\\
2 &  $3.28835292994256$ & $5.2\cdot 10^{-14}$ & $6.7\cdot 10^{-12}$ & $1.8\cdot 10^{-7}$ & $1.4\cdot 10^{-5}$\\
3 &  $4.14986421874478$ & $1.2\cdot 10^{-13}$ & $8.2\cdot 10^{-13}$ & $5.0\cdot 10^{-7}$ & $3.1\cdot 10^{-5}$\\
5 &  $6.00758145811600$ & $6.6\cdot 10^{-13}$ & $5.0\cdot 10^{-13}$ & $2.2\cdot 10^{-6}$ & $4.1\cdot 10^{-6}$\\
7 &  $7.93973737689930$ & $2.9\cdot 10^{-13}$ & $6.0\cdot 10^{-13}$ & $7.3\cdot 10^{-6}$\\
10 & $10.8861250916173$ &  $1.5\cdot 10^{-12}$ & $8.6\cdot 10^{-13}$ & $2.4\cdot 10^{-5}$\\
20 & $20.8202301908124$ & $1.4\cdot 10^{-11}$  & $9.6\cdot 10^{-14}$ & $3.4\cdot 10^{-4}$\\
30 & $30.7973502195868$ & $1.5\cdot 10^{-11}$  & $1.9\cdot 10^{-12}$ & $1.6\cdot 10^{-3}$\\
50 & $50.7786768095149$ &  $8.7\cdot 10^{-11}$ & $1.3\cdot 10^{-10}$ & $1.0\cdot 10^{-2}$\\
100 &$100.764442245651$ & $9.4\cdot 10^{-9}$ & $5.3\cdot 10^{-4}$ &\\
\hline
\end{tabular}
\caption{The eigenvalues from Example \ref{Ex SP1} for $l=3/2$ compared to the results reported in \cite{CKT2013}. Since eigenvalues produced by the \textsc{Matslise} package coincide with the exact eigenvalues to all reported digits, we present them in the combined column. $\Delta\omega_n$ denotes the absolute error of the computed eigenvalue $\omega_n$.}
\label{Ex1Table1}
\end{table}

On Figure \ref{Ex1Fig1} we present the absolute errors of the computed eigenvalues for different values of $l$. One can see that the obtained errors follow theoretical predictions given in Proposition \ref{Prop Int l}, Theorem \ref{Th ul Neumann} and Remark \ref{Rmk Accuracy decay}. That is, for integer values of $l$, only few coefficients $\beta_k$ were used (small values of $N$ on the left plot) due to the rapid decrease of them, while for non-integer values of $l$ larger numbers of the coefficients were necessary (large values of $N$ on the right plot). The better accuracy of the first 90-100 eigenvalues on the right plot is due to the estimate \eqref{est uln exp}. The fast deterioration of the accuracy for higher eigenvalues for $l=5$ and $l=10$ is explained in Remark \ref{Rmk Accuracy decay}.

\begin{figure}[htb!]
\centering
\includegraphics[bb=198 309 414 482, width=3in,height=2.4in]
{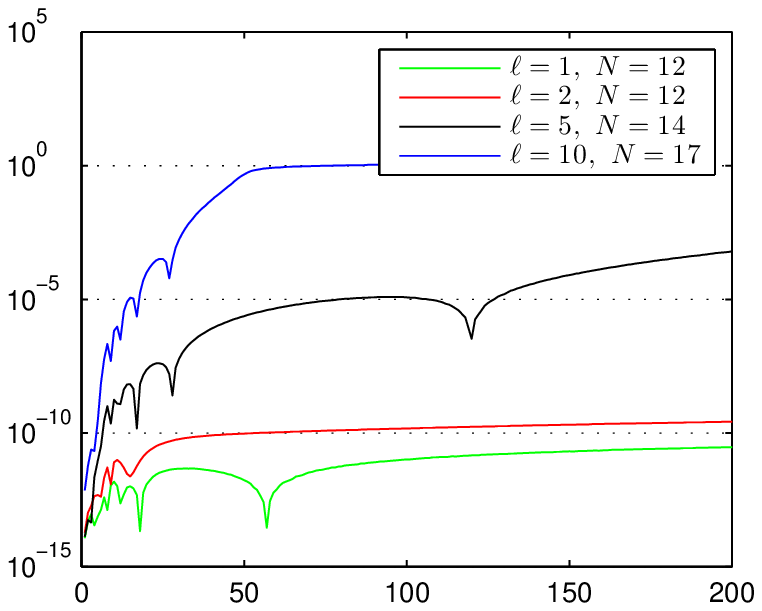}\ \ \
\includegraphics[bb=198 309 414 482, width=3in,height=2.4in]
{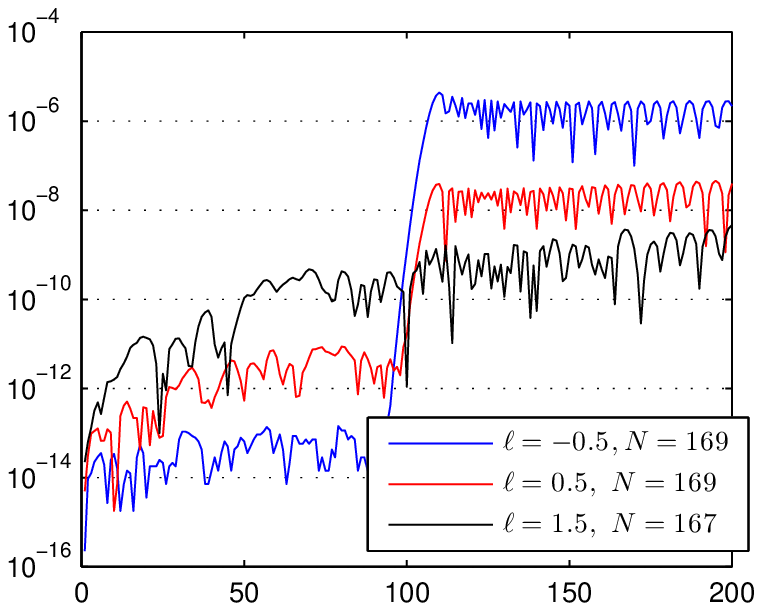}
\caption{Absolute errors of the first 200 eigenvalues for the spectral problem from Example \ref{Ex SP1} for different values of $l$. On the legends the number $N$ used for calculation of the approximate solution \eqref{ulN} is shown next to the value of the parameter $l$.}
\label{Ex1Fig1}
\end{figure}
\end{example}

\begin{example}\label{Ex SP2}
Consider the same equation as in Example \ref{Ex SP1} with a different boundary condition:
\[
u'(\omega, \pi)=0.
\]
Absolute errors of the obtained eigenvalues are presented on Figure \ref{Ex2Fig1}. Again, the results follow the theoretical predictions from Theorem \ref{Th R2 Legendre}.
\begin{figure}[htb!]
\centering
\includegraphics[bb=126 309 486 482, width=5in,height=2.4in]
{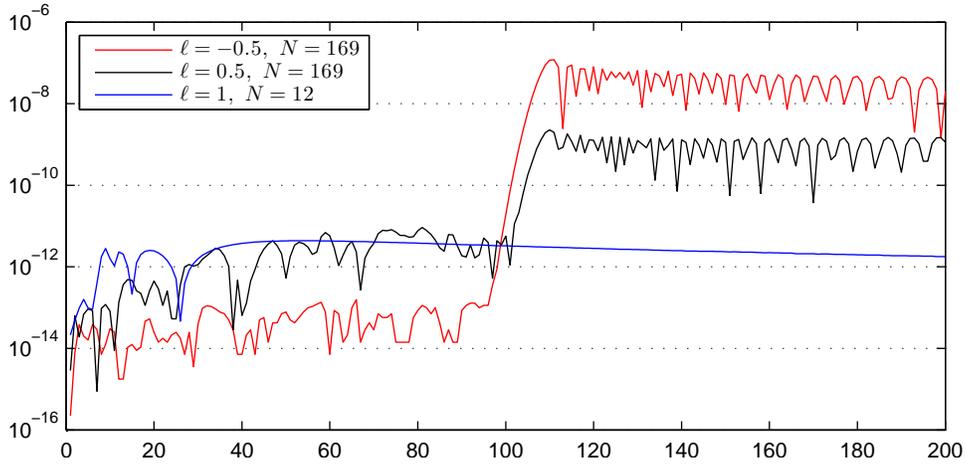}
\caption{Absolute errors of the first 200 eigenvalues for the spectral problem from Example \ref{Ex SP2} for different values of $l$. On the legend the number $N$ used for calculation of the approximate solution \eqref{ulN} is shown next to the value of the parameter $l$.}
\label{Ex2Fig1}
\end{figure}
\end{example}

\begin{example}\label{Ex SP3}
Consider the spectral problem for the hydrogen atom equation \cite[Example 4]{BoumenirChanane}, \cite[Example 7.4]{CKT2013}
\begin{gather*}
-u''+\left(\frac{l(l+1)}{x^2}+\frac 1x\right) u=\omega^2 u, \quad 0\le x\le \pi,\\
u(\omega, \pi)=0.
\end{gather*}
Absolute errors of the obtained eigenvalues are presented on Figure \ref{Ex3Fig1}. The singularity in the potential presents no difficulty for the proposed method. However the case $l=1$, contrary to the previous examples, requires more coefficients $\beta_n$ for the approximate solution \eqref{ulN} to be computed. They do not decay equally fast for integer values of $l$.
\begin{figure}[htb!]
\centering
\includegraphics[bb=126 309 486 482, width=5in,height=2.4in]
{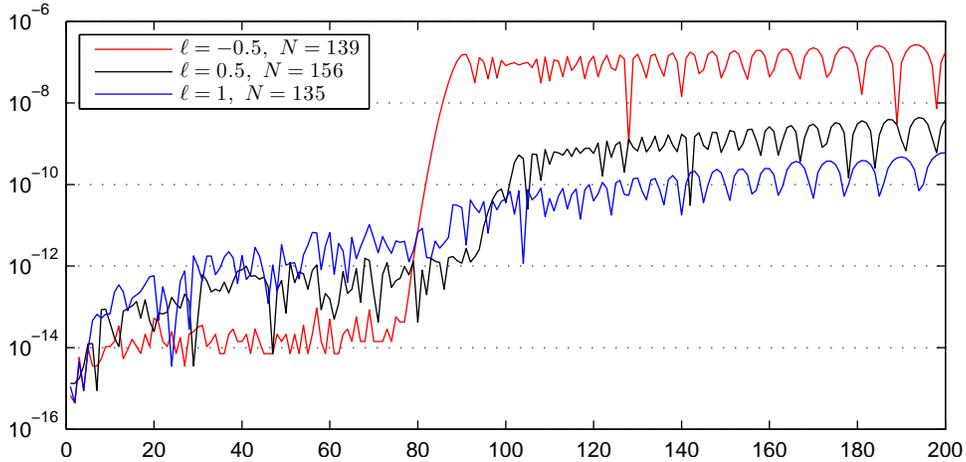}
\caption{Absolute errors of the first 200 eigenvalues for the spectral problem from Example \ref{Ex SP3} for different values of $l$. On the legend the number $N$ used for calculation of the approximate solution \eqref{ulN} is shown next to the value of the parameter $l$.}
\label{Ex3Fig1}
\end{figure}
\end{example}

\end{document}